\documentclass[11pt]{amsart}
\usepackage{geometry}                
\geometry{letterpaper}                    

\usepackage{graphicx}
\usepackage{amssymb}
\usepackage{epstopdf}
\usepackage{amsthm}
\usepackage{thmtools}
\usepackage{amsmath}
\usepackage{amscd}
\usepackage{verbatim}
\usepackage{amscd}
\usepackage{hyperref}
\usepackage{lscape}
\usepackage{comment}
\usepackage{cite}
\DeclareGraphicsRule{.tif}{png}{.png}{`convert #1 `dirname #1`/`basename #1 .tif`.png}

\DeclareMathOperator{\Spec}{Spec}
\DeclareMathOperator{\coker}{coker}
\DeclareMathOperator{\im}{im}

\DeclareMathOperator{\p}{\mathfrak{p}}

\DeclareMathOperator{\kk}{\mathbb K}
\newcommand{\tildemid}{\raisebox{0.5ex}{\texttildelow}}

\newtheorem{theorem}{Theorem}[section]
\newtheorem{definition}[theorem]{Definition}

\newtheorem{lemma}[theorem]{Lemma}
\newtheorem{prop}[theorem]{Proposition}

\begin{document}
\title{On the de Rham homology of affine varieties in characteristic 0}
\author{Nicole Bridgland}
\thanks{NSF support through grants DMS-1500264 and DMS-1800355 is gratefully acknowledged}
\address{Department of Mathematics, University of Minnesota, Minneapolis, MN 55455}
\email{bridg130@umn.edu}
\maketitle

\begin{abstract}
Let $\kk$ be a field of characteristic 0, let $S$ be a complete local ring with coefficient field $\kk$, let $\kk[[x_1,\dots,x_n]]$ be the ring of formal power series in variables $x_1,\dots, x_n$ with coefficients from $\kk$, let $\kk[[x_1,\dots,x_n]]\to S$ be a $\kk$-algebra surjection and let $E_\bullet^{\bullet,\bullet} $ be the associated Hodge-de Rham spectral sequence for the computation of the de Rham homology of $S$. Nicholas Switala \cite{switala} proved that this spectral sequence is independent of the surjection beginning with the $E_2$ page, and the groups $E^{p,q}_2$ are all finite-dimensional over $\kk$. 

In this paper we extend this result to affine varieties. Namely, let $Y$ be an affine variety over $\kk$, let $X$ be a non-singular affine variety over $\kk$, let $Y\subset X$ be an embedding over $\kk$ and let $E_\bullet^{\bullet,\bullet} $ be the associated Hodge-de Rham spectral sequence for the computation of the de Rham homology of $Y$. Then this spectral sequence is independent of the embedding beginning with the $E_2$ page, and the groups $E^{p,q}_2$ are all finite-dimensional over $\kk$. 
\end{abstract}

\section{Introduction}
In \cite{hartshorne}, Hartshorne developed algebraic de Rham homology and cohomology theories using embeddings of schemes over a field $\kk$ of characteristic zero.  The algebraic de Rham homology and cohomology are known to be independent of the embedding, and both are known to be finite-dimensional over $\kk$.

Hartshorne defined the algebraic de Rham homology groups $H^{\rm dR}_*(Y)$ of a closed subscheme $Y$ of a smooth scheme $X$ over $\kk$ as the local hypercohomology with support in $Y$ of the de Rham complex on $X$, which he establishes to be independent of $X$ and the embedding.  One reason this is useful and interesting is that even though the definition of $H^{\rm dR}_*(Y)$ is completely algebraic, makes sense for any field of characteristic 0, and not based on any notions of continuity, Hartshorne in the same paper proved a comparison theorem that says the following: if $\kk=\mathbb C$ is the field of complex numbers, then $H^{\rm dR}_i(Y)$, the $i$-th algebraic de Rham homology group of $Y$, is isomorphic to $H_i^{\rm BM}(Y_{\rm an})$, the $i$-th Borel-Moore homology group of the analytic space associated with $Y$.

Since this is computed from the hypercohomology of a complex, we may also compute it via a spectral sequence, known in this case as the Hodge-de Rham spectral sequence. The first page of this spectral sequence is given by $E_1^{n-p,n-q}=H_Y^{n-q}(X, \Omega_X^{n-p})$, where $\Omega^{\bullet}_X$ denotes the de Rham complex on $X$, and the abutment of this spectral sequence is $H^{dR}_{p+q}(Y)$.  While the abutment is known to be independent of the embedding $Y \rightarrow X$, the spectral sequence is not; in particular, the $E_1$-page is known to depend on the embedding.

In \cite{switala}, Switala showed the embedding-independence of the spectral sequence, starting with the $E_2$ page and up to degree shift, in the case where $X$ is the spectrum of a complete local ring.  Additionally, he showed that the $\kk$-spaces appearing on the $E_2$ page are finite dimensional.  Since these dimensions are independent of the embedding, these are invariants associated to the variety $Y$.  In this paper we show the corresponding results for embeddings of affine varieties into arbitrary smooth affine varieties. Our theorem is the following:

\begin{theorem}
Let $Y$ be an affine variety over a field $\kk$ of characteristic 0, and let $Y \rightarrow X$ be an embedding of $Y$ into a smooth affine variety $X$ over $\kk$.  

(a) Starting with the $E_2$ page, the Hodge-de Rham spectral sequence $$E_1^{n-p,n-q}=H^{n-q}_Y(X, \Omega_X^{n-p})\Longrightarrow H^{\rm dR}_{p+q}(Y)$$ for the computation of the algebraic de Rham homology of $Y$, associated to this embedding, is independent of the choice of $X$ and the embedding, up to a bi-degree shift.  

(b) The modules appearing on the $E_2$ page, and hence on every $E_r$ page for $r\geq 2$, are finite-dimensional $\kk$-spaces.
\end{theorem}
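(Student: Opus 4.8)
The plan is to reduce the theorem to the behaviour of the de Rham homology complex under two elementary changes of embedding. Write $K^{\bullet}(X,Y) = R\Gamma\bigl(X, R\Gamma_Y(\Omega^{\bullet}_X)\bigr)$, filtered by the subcomplexes $\Omega^{\geq p}_X$; the spectral sequence of the statement is a reindexing of its Hodge--de Rham spectral sequence, so it is enough to control this filtered complex, from the $E_2$-page on and up to a bidegree shift, as the embedding varies. Since $X=\Spec A$ is affine and each $\Omega^p_X$ is locally free, $R\Gamma_Y$ and $R\Gamma(X,-)$ are computed by the \v{C}ech complex of the defining ideal $I$ of $Y$, and the $q$-th row of the $E_1$-page is the algebraic de Rham complex $\Omega^{\bullet}_{A/k}\otimes_A H^q_I(A)$ of the local cohomology module $H^q_I(A)=H^q_Y(X,\mathcal O_X)$; hence $E_2^{p,q}$ is the algebraic de Rham cohomology of the $D_A$-module $H^q_I(A)$. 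By Lyubeznik's theorem this module is holonomic, and the algebraic de Rham cohomology of a holonomic module on a smooth affine variety over a field of characteristic $0$ is finite-dimensional; this gives the finiteness assertion immediately, and once independence is known these dimensions are invariants of $Y$.

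For independence I would first run the standard reduction. Given $Y\hookrightarrow X_1$ and $Y\hookrightarrow X_2$, choose for each $i$ a closed immersion $X_i\hookrightarrow\mathbb A^{n_i}$ into an affine space large enough that the normal bundle $N_{X_i/\mathbb A^{n_i}}$ is free (it is stably free, hence free once its rank exceeds $\dim X_i$); it then suffices to relate $(X,Y)$ to $(\mathbb A^n,Y)$ along such a closed immersion, and to relate any two embeddings of $Y$ into affine spaces, the latter by routing through $\mathbb A^{n_1+n_2}$ and straightening each graph embedding $Y\hookrightarrow\mathbb A^a\times\mathbb A^b$ to the product embedding $Y\hookrightarrow\mathbb A^a\times\{0\}$ via an automorphism $(x,t)\mapsto(x,t-\phi(x))$, where $\phi$ lifts the relevant functions off the closed subvariety $Y$. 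This leaves two model comparisons: (a) from $(X,Y)$ to $(X\times\mathbb A^d,\ Y\hookrightarrow X\times\{0\})$, and (b) from $(X,Y)$ to $(\mathbb A^n,Y)$ along a closed immersion $X\hookrightarrow\mathbb A^n$ with free normal bundle. For (a), flat base change for local cohomology in the $\mathbb A^d$-directions identifies the double complex of $(X\times\mathbb A^d,Y)$ with the tensor product over $k$ of the double complex of $(X,Y)$ with that of $(\mathbb A^d,\{0\})$; the second factor has a spectral sequence supported in one row which collapses at $E_2$ onto a one-dimensional space (the de Rham cohomology of the $\delta$-module $H^d_{(t_1,\dots,t_d)}(k[t_1,\dots,t_d])$), so the K\"unneth formula for the spectral sequence of a tensor product of filtered complexes over a field gives $E_r^{p,q}(X\times\mathbb A^d,Y)\cong E_r^{p-d,q-d}(X,Y)$ for all $r\geq 2$, with the matching shift of the abutment. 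For (b), with $N_{X/\mathbb A^n}$ free and $X$ smooth, the formal completion of $\mathbb A^n$ along $X$ is isomorphic over $X$ to the completion of $X\times\mathbb A^c$ along its zero section; since each $H^q_Y(\mathbb A^n,\Omega^p)$ is $I_X$-torsion it is unchanged by that completion, so the spectral sequence of $(\mathbb A^n,Y)$ coincides with that of $(X\times\mathbb A^c,Y)$ and (b) reduces to (a). One may also organize things as a d\'evissage on $\dim Y$: for $Z\subsetneq Y$ closed with $Y\setminus Z$ smooth, the excision triangle $K^{\bullet}(X,Z)\to K^{\bullet}(X,Y)\to K^{\bullet}(X\setminus Z,\ Y\setminus Z)$ is a triangle of filtered complexes giving a long exact sequence of $E_r$-pages; the smooth case is immediate (from $E_2$ on, up to a codimension shift, the spectral sequence becomes the one-row spectral sequence of $\Gamma(Y,\Omega^{\bullet}_Y)$), and the base case $\dim Y=0$ is precisely the complete local theorem of \cite{switala}.

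The main difficulty throughout is bookkeeping for the Hodge filtration. The terms $E_1^{p,q}=H^q_Y(X,\Omega^p_X)$ are infinite-dimensional and genuinely embedding-dependent — their dimensions already change under (a) — so one cannot argue one column at a time; the content is that the comparison maps become isomorphisms, up to a single bidegree shift, only after passing to $E_2$. In the K\"unneth step this is the assertion that $R\Gamma_{(t_1,\dots,t_d)}(\Omega^{\bullet}_{\mathbb A^d})$ is \emph{not} filtered quasi-isomorphic to a one-term complex while its spectral sequence nevertheless collapses at $E_2$, and that this collapse survives tensoring; in the reduction of (b) to (a) one must in addition carry the filtration through the passage to the formal neighbourhood and the trivialization of the normal bundle — a piece of formal geometry absent from the complete local situation of \cite{switala}, where the ambient ring is already a power series ring. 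Finally one must check that the filtered comparisons, and the excision triangle in the alternative argument, are natural enough that the resulting isomorphisms of $E_2$-pages, and hence the dimensions $\dim_k E_2^{p,q}$, are canonically attached to $Y$.
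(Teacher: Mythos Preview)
Your architecture is reasonable and your step (a) is essentially correct --- it is, morally, the iterated ``plus operation'' the paper uses locally (Proposition 4.1 and the lemmas feeding into it). The finiteness argument is also the paper's, except that you invoke finite-dimensionality of de Rham cohomology of holonomic $D$-modules on an \emph{arbitrary} smooth affine $X$, whereas the paper first proves independence, reduces to $X=\mathbb A^n$, and only then appeals to Bj\"ork's Theorem~2.1; your order forces you to import a stronger black box.

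The genuine gap is in step (b). You claim one can enlarge $n$ so that $N=N_{X/\mathbb A^n}$ becomes free, because $N$ is stably free and Bass cancellation then applies. But the split conormal sequence only gives $N\oplus T_X\cong\mathcal O_X^{\,n}$; enlarging $n$ replaces $N$ by $N\oplus\mathcal O_X^{\,m}$, so $N$ is stably free iff $T_X$ is, and this fails in general. For a smooth affine curve $C$ one has $\det N\cong K_C$, and over a curve every projective module of rank $\geq 2$ has the form $\det\oplus\mathcal O^{r-1}$; hence $N$ is stably free iff $K_C\cong\mathcal O_C$. Removing a non-Weierstrass point from a smooth projective curve of genus $\geq 2$ gives $K_C\not\cong\mathcal O_C$, so no embedding of $C$ into affine space has free normal bundle. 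Your reduction of (b) to (a) therefore does not go through, and the formal completion of $\mathbb A^n$ along $X$ is only the formal neighbourhood of the zero section in the total space of $N$, not $\widehat{X\times\mathbb A^{c}}$.

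This is precisely the difficulty the paper is organized around. It completes $T$ along $J$ and uses formal smoothness of $R=T/J$ to get a section $R\hookrightarrow\hat T$, but then works with the resulting decomposition $\hat T\otimes_T\Omega_{T/k}=Q\oplus P$ where $P$ is merely \emph{projective} of rank $n-s$ with $P/JP\cong J/J^2$ (Lemma~3.5). From this it builds a global, degree-shifted morphism of complexes $\Omega^{\bullet}(R)\to\Omega^{\bullet}(H^{n-s}_J(\hat T))$ by identifying $\Omega^{\bullet}(R)$ with the $J$-annihilated part of $H^{n-s}_J(\hat T)\otimes\wedge^{n-s}P\otimes\wedge^{\bullet}Q$; the identification $M\cong R$ of that annihilator (Lemma~3.7) uses the Koszul resolution on $P$ and the invertibility of $\wedge^{n-s}(J/J^2)$, and is exactly what replaces your trivialization of $N$. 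The $E_2$-isomorphism is then checked by localizing to opens where $J$ \emph{is} a complete intersection --- so $N$ is locally trivial and your computation (a) applies, one variable at a time --- and gluing the local isomorphisms through the \v Cech complex via repeated use of the 5-lemma (Section~4.1). In short: the paper constructs the comparison map globally with projective $P$ and verifies it is an $E_2$-isomorphism locally; you attempt to trivialize $P$ globally, which cannot be done in general.
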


The precise statement of embedding-independence is as follows:  given two embeddings $Y \rightarrow X$ and $Y \rightarrow X'$ of an affine variety $Y$ into smooth affine varieties $X$ and $X'$ with $\dim X = s$ and $\dim X' = s'$, there are two associated spectral sequences $E^{\bullet, \bullet}_{\bullet}$ and ${E'}_{\bullet}^{\bullet, \bullet}$.  The theorem asserts that there is a morphism $E_{\bullet}^{\bullet, \bullet} \rightarrow {E'}^{\bullet, \bullet}_{\bullet}$ with bidegree shift $(s'-s, s'-s)$, which is an isomorphism on the $E_2$-page and later.  Precise definitions of the notions of bidegree shifts and morphisms of spectral sequences are provided in section 2 on the preliminaries.

Since the modules appearing on the $E_2$-page are finite-dimensional, the following natural question arises:

{\it Question.} Let Y be an affine variety over a field $\kk$ of characteristic 0 and let $Y \rightarrow X$ be an embedding of $Y$ into a smooth affine variety $X$ over $\kk$. Does the associated Hodge-de Rham spectral sequence for the computation of the algebraic de Rham homology of $Y$ degenerate at $E_2$?

The very recent paper \cite{arXiv:2110.08197} shows that the answer is positive for determinantal varieties, and another very recent paper \cite{arXiv:2106.04457} establishes that the answer is positive in a few more special cases. In general, however, the question is open.

The results in this paper are from the author's thesis, supervised by Gennady Lyubeznik.  We would like to thank Professor Lyubeznik both for suggesting the problem and for his guidance and support.
\section{Preliminaries}

\subsection{$\mathcal{D}$-modules}
Let $\kk$ be a field of characteristic $0$, and let $T=\kk[x_1,\dots ,x_n]$.  We write $\mathcal{D}(T,\kk)$ for the subring of End$_{\kk}$($T$) generated by $T$ and the differential operators $\delta_1,\dots ,\delta_n$, where $\delta_i=\frac{\partial}{\partial x_i}$, the usual partial differentiation with respect to $x_i$.  The ring $\mathcal{D}(T,\kk)$ is noncommutative.  We consider a filtration on $\mathcal{D}(T,\kk)$ by degree (called the Bernstein filtration), that is $\mathcal{F}_0 = k$ and $\mathcal{F}_v=\{k$-span of monomials in the variables $x_1, x_2, \dots, x_n, \delta_1, \delta_2, \dots \delta_n$ of degree $\leq v\}$ for $v > 0$.  The graded ring associated to this filtration, gr $\mathcal{D}(T,\kk)$, is commutative.  A filtration on a left $\mathcal{D}(T,\kk)$-module $M$ is called {\it good} if gr $M$ is finitely generated as a gr $\mathcal{D}(T,\kk)$-module. The module $M$ is a finitely generated left $\mathcal{D}(T,\kk)$-module, if and only if there is a good filtration of $M$ (Propositions 1.2.6 and 1.2.7 in \cite{bjork}). 

Given a finitely generated $\mathcal{D}(T,\kk)$-module $M$, we may define the dimension of $M$ as the dimension of gr $M$ as a gr $\mathcal{D}(T,\kk)$-module.  Let $\Gamma_v$ be a good filtration of $M$, and consider the Hilbert function $h(v) = \dim_{\kk}(\Gamma_v)$.  Hilbert proved that for large enough $v$, this function agrees with a polynomial in $v$.  The dimension of $M$ is defined to be the degree of this polynomial.  (More details can be found in section 1.9 of \cite{eisenbud}, where this statement appears as Theorem 1.11, for example.)

Bernstein's theorem states that for any finitely generated $\mathcal{D}(T,\kk)$-module $M$, the dimension of $M$ is bounded between $n$ and $2n$.  Those finitely generated modules of dimension $n$ form the Bernstein class of $D(T,\kk)$-modules and are called holonomic.  Holonomic modules form a full subcategory of $D(T,\kk)$-modules, closed under formation of submodules, quotient modules, and extensions. (Prop 1.5.2 in \cite{bjork})

If $M$ is a holonomic $D$-module, and $f$ is an element of $T$, then $M_f$ is a holonomic $D$-module  (Theorem 1.5.9 in \cite{bjork}). It follows by considering the Koszul complex that local cohomology modules of a holonomic module are holonomic \cite{lyubeznik}.

\subsection{K\"ahler Differentials}
Here we summarize fundamentals from \cite{eisenbud}.  Let $R$ be a ring, let $S$ be an $R$-algebra and consider the set of $R$-linear derivations of $S$: $R$-linear maps $d$ from $S$ to some $S$-module $M$ satisfying $d(ab) = ad(b) + bd(a)$.  The module of K\"{a}hler differentials of $S$ over $R$, written $\Omega_{S/R}$, is the dual object to the set of $\kk$-linear derivations of $S$.  It is the $S$-module generated by $\{ds | s \in S\}$ with relations $d(ss')=sds' + s'ds$ and $d(as + a's')=ads+a'ds'$ for any $s$ and $s'$ in $S$, and any $a, a'$ in $R$.  Observe that these two relations together require that $da=0$ for any element $a \in R$.  We can consider $d$ to be a map from $S$ to $\Omega_{S/R}$ that sends $a\in S$ to $da\in \Omega_{S/R}$, and we call $d$ the universal $R$-linear derivation.  It has the following universal property: given any $S$-module $M$ and a derviation $d': S \rightarrow M$, there exists a unique $S$-linear homomorphism $e: \Omega_{S/R} \rightarrow M$ with $e \circ d = d'$.

For a simple example, consider the case in which $R=\kk$, and $S=\kk[x_1,\dots,x_n]$.  In this situation, $\Omega_{S/R}$ is the free $S$-module on generators $dx_1, \dots dx_n$.  

We need two additional properties of the module $\Omega_{S/R}$ for our results.  First, we need the conormal sequence, which gives us a relationship between the modules of differentials for a given surjection of algebras. The following is proposition 16.3 in \cite{eisenbud}.

\begin{prop}\label{conormal}  If $\pi:T \rightarrow S$ is an surjection of $R$-algebras and $I$ is the kernel of $\pi$, then there is an exact sequence of $S$-modules $$I/I^2 \rightarrow S\otimes_T\Omega_{T/R} \rightarrow \Omega_{S/R} \rightarrow 0.$$
where the left-most map is given by the restriction of $d$ to $I$, and the middle map is given by $s\otimes dt \mapsto sd\pi(t)$
\end{prop}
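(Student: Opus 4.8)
The plan is to verify exactness at each of the three spots in turn; the first two are formal, and the third requires a genuine construction. First I would show surjectivity of the middle map $\beta\colon S\otimes_T\Omega_{T/R}\to\Omega_{S/R}$, $s\otimes dt\mapsto s\,d\pi(t)$. Since $\Omega_{S/R}$ is generated as an $S$-module by the symbols $d\bar s$ with $\bar s\in S$, and since $\pi$ is surjective so that every $\bar s$ equals $\pi(t)$ for some $t\in T$, each generator satisfies $d\bar s=d\pi(t)=\beta(1\otimes dt)$; hence $\beta$ is onto.

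Next I would check that the left-hand map $\alpha\colon I/I^2\to S\otimes_T\Omega_{T/R}$ is well defined and that $\beta\circ\alpha=0$. Well-definedness amounts to noting that for $a,a'\in I$ one has $d(aa')=a\,da'+a'\,da$, whose image in $S\otimes_T\Omega_{T/R}$ vanishes because $S=T/I$ acts trivially on the classes of $a\,da'$ and $a'\,da$; thus the restriction of $d$ to $I$ factors through $I/I^2$. For the composite, $\beta(\alpha(\bar a))=\beta(1\otimes da)=d\pi(a)=d(0)=0$ since $a\in I=\ker\pi$.

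The heart of the argument is exactness at $S\otimes_T\Omega_{T/R}$, that is, $\ker\beta=\im\alpha$. As $\im\alpha\subseteq\ker\beta$ is already known, it suffices to construct an inverse to the surjection $\bar\beta\colon N:=\coker\alpha\twoheadrightarrow\Omega_{S/R}$ induced by $\beta$. To this end I would build an $R$-linear derivation $D\colon S\to N$ by choosing, for each $\bar s\in S$, a lift $t\in T$ with $\pi(t)=\bar s$ and setting $D(\bar s)$ to be the class of $1\otimes dt$ in $N$. This is independent of the chosen lift: two lifts differ by an element $a\in I$, and the class of $1\otimes da$ is zero in $N=\coker\alpha$ by construction. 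The main obstacle, though essentially routine, will be verifying that $D$ is genuinely an $R$-linear derivation — additivity, $R$-linearity, and the Leibniz identity — each of which descends from the corresponding property of the universal derivation $d$ on $T$ once one fixes compatible lifts and uses that $S$ acts on $N$ through $\pi$. Granting this, the universal property of $\Omega_{S/R}$ yields a unique $S$-linear map $e\colon\Omega_{S/R}\to N$ with $e\circ d_{S/R}=D$. Finally I would check that $e$ and $\bar\beta$ are mutually inverse, which is immediate on the generating symbols $d\bar s$ of $\Omega_{S/R}$ and on the classes of $1\otimes dt$ generating $N$. This identifies $N$ with $\Omega_{S/R}$, forces $\ker\beta=\im\alpha$, and completes the proof.
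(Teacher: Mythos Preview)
Your argument is correct and is the standard proof of the conormal sequence: surjectivity of $\beta$ from surjectivity of $\pi$, vanishing of $\beta\circ\alpha$ since $\pi$ kills $I$, and exactness in the middle by building a derivation $S\to\coker\alpha$ and invoking the universal property of $\Omega_{S/R}$ to produce an inverse to the induced map $\bar\beta$. The one point worth making explicit, which you gloss over, is that $\alpha$ is $S$-linear (so that $\coker\alpha$ is an $S$-module, which is needed to apply the universal property): for $\bar s=\pi(t)$ and $a\in I$ one has $1\otimes d(ta)=\pi(t)\otimes da+\pi(a)\otimes dt=\bar s\cdot(1\otimes da)$ in $S\otimes_T\Omega_{T/R}$.

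As for comparison with the paper: the paper does not prove this proposition at all; it simply quotes it as Proposition~16.3 of Eisenbud. Your proof is essentially the one given there, so there is no methodological divergence to discuss.
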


We also need to know the relationship between the formation of the module of differentials and localizations, namely that they commute.  This is proposition 16.9 in \cite{eisenbud}.

\begin{prop}\label{localdiff}
Let $S$ be a $R$-algebra and let $U$ be a multiplicative subset of $S$.  Then $$\Omega_{S[U^{-1}]/R} \cong S[U^{-1}] \otimes_S \Omega_{S/R}$$
\end{prop}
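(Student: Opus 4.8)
The plan is to deduce the isomorphism from the universal property of K\"ahler differentials and let Yoneda's lemma take care of the bookkeeping. Write $W = S[U^{-1}]$. Recall that $\Omega_{W/R}$ together with its universal derivation $d_W \colon W \to \Omega_{W/R}$ is characterized, up to unique $W$-linear isomorphism, by the fact that for every $W$-module $M$ the map $e \mapsto e \circ d_W$ is a bijection $\Hom_W(\Omega_{W/R}, M) \xrightarrow{\sim} \operatorname{Der}_R(W, M)$. For the other side, for every $W$-module $M$ the extension--restriction adjunction gives $\Hom_W(W \otimes_S \Omega_{S/R}, M) \cong \Hom_S(\Omega_{S/R}, M)$ with $M$ regarded as an $S$-module by restriction along $S \to W$, and the universal property of $\Omega_{S/R}$ identifies the right-hand side with $\operatorname{Der}_R(S, M)$. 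So it suffices to produce a bijection $\operatorname{Der}_R(W, M) \cong \operatorname{Der}_R(S, M)$, natural in the $W$-module $M$, and the natural candidate is restriction of derivations along $S \to W$.

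Thus the entire content of the proposition is the following claim, which I expect to be the main obstacle --- indeed, essentially the only step requiring an argument: for every $W$-module $M$, each $R$-linear derivation $\delta \colon S \to M$ extends uniquely to an $R$-linear derivation $\tilde\delta \colon W \to M$. Uniqueness is forced by the Leibniz rule applied to $s = u \cdot (s/u)$ for $u \in U$, giving $\delta(s) = u\,\tilde\delta(s/u) + (s/u)\,\delta(u)$; since $u$ acts invertibly on the $W$-module $M$ this pins down $\tilde\delta(s/u) = u^{-1}\bigl(\delta(s) - (s/u)\,\delta(u)\bigr)$. For existence I would \emph{define} $\tilde\delta$ by this ``quotient rule'' and check three things: that it is well defined (if $s/u = s'/u'$ then $v(su' - s'u) = 0$ in $S$ for some $v \in U$, and applying $\delta$ and using that $u$, $u'$, $v$ act invertibly on $M$ forces the two candidate values to agree), that it is additive, and that it satisfies the Leibniz rule. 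Each of these is a routine computation in which one repeatedly clears denominators --- legitimate precisely because those denominators act invertibly on $M$ --- and taking $u = 1$ shows $\tilde\delta|_S = \delta$.

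Granting the claim, I would assemble the chain of bijections
\[
\Hom_W(W \otimes_S \Omega_{S/R}, M) \;\cong\; \operatorname{Der}_R(S, M) \;\cong\; \operatorname{Der}_R(W, M) \;\cong\; \Hom_W(\Omega_{W/R}, M),
\]
natural in the $W$-module $M$, and invoke Yoneda to obtain a unique $W$-linear isomorphism $\Omega_{W/R} \cong W \otimes_S \Omega_{S/R}$. Chasing the identity map through the chain shows that under this isomorphism $d_W(s/u)$ corresponds to $\tfrac{1}{u} \otimes d_S(s) - \tfrac{s}{u^{2}} \otimes d_S(u)$; in particular $d_W(s) \mapsto 1 \otimes d_S(s)$, so the two universal derivations are compatible. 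An alternative would be to present $W$ as the quotient $S[\{t_u : u \in U\}]/(\{u t_u - 1 : u \in U\})$ and feed this into the conormal sequence of Proposition~\ref{conormal}: after computing $\Omega_{S[\{t_u\}]/R} \cong \bigl(S[\{t_u\}] \otimes_S \Omega_{S/R}\bigr) \oplus \bigoplus_u S[\{t_u\}]\, dt_u$, the relations $d(u t_u - 1) = u\, dt_u + t_u\, du$ let one eliminate each $dt_u$ and recover $W \otimes_S \Omega_{S/R}$. This works too, but it uses the computation of differentials of a polynomial extension, which the excerpt does not record, so I would favor the universal-property argument above.
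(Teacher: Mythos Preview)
Your argument is correct. The paper itself does not supply a proof of this proposition; it is quoted verbatim as Proposition~16.9 of \cite{eisenbud} and left without further justification. Your approach---reducing to the claim that every $R$-linear derivation $S \to M$ into a $W$-module extends uniquely to $W = S[U^{-1}]$, then invoking Yoneda---is precisely the standard one, and is in fact the argument Eisenbud gives. So there is nothing to compare: you have filled in the omitted proof along the expected lines.
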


\subsection{Preliminaries on de Rham cohomology}
Now let $S$ be a $\kk$-algebra, and let $\Omega^i(S) = \wedge^i\Omega_{S/\kk}$.  Then there is a complex $$0 \rightarrow S \rightarrow \Omega^1(S) \rightarrow \Omega^2(S) \rightarrow \dots$$ where the map from $\Omega^p(S)$ to $\Omega^{p+1}(S)$ is $\kk$-linear and sends $sds_1\wedge\dots\wedge ds_p$ to $ds\wedge ds_1\wedge\dots\wedge ds_p$.  This is well-defined because of properties of derivations, and is a complex because the usual wedge rules apply: $dx_i\wedge dx_i = 0$ for all $i$, and $dx_i \wedge dx_j = - dx_j\wedge dx_i$ for all $i$ and $j$.

Let $M$ be a $\mathcal{D}(T,\kk)$-module, where $T=\kk[x_1,\dots, x_n]$ as before. The de Rham complex of $M$, denoted $\Omega^\bullet(M)$, is the complex $$0 \rightarrow M \rightarrow \Omega^1(M) \rightarrow \Omega^2(M) \rightarrow \dots$$ where $\Omega^p(M) = M\otimes_T\Omega^p(T)=\bigoplus_{i_1<i_2< \cdots< i_p} M dx_{i_1} \wedge \dots \wedge dx_{i_p}$ and the map from $\Omega^p(M)$ to $\Omega^{p+1}(M)$ sends $\oplus m_{i_1,\dots, i_p} dx_{i_1} \wedge \dots \wedge dx_{i_p}$ to $\sum_{j=1}^n \oplus \delta_j(m_{i_1,\dots, i_p})dx_j\wedge dx_{i_1}\wedge \dots \wedge dx_{i_p}$, where the direct sum on the right hand side is taken over all $i_1 < i_2 \dots < i_p$ and $j$.  This sum is an element of $\Omega^{p+1}$ because the usual wedge rules apply.

The $p$-th de Rham cohomology of a $\mathcal D(T,\kk)$-module $M$, denoted $H^p(\Omega^\bullet(M))$, is the $p$-th cohomology group of $\Omega^\bullet(M)$.

We will need the following theorem, which is Theorem 6.1 in chapter 1 of \cite{bjork}, later.
\begin{theorem}\label{findim}
If $M$ is a holonomic $\mathcal{D}(T,\kk)$-module, where $T=\kk[x_1,...,x_n]$ is the ring of polynomials in variables $x_1,\dots,x_n$ over $\kk$, then $H^p(\Omega^{\bullet}(M))$ are finite dimensional vector spaces over $\kk$.
\end{theorem}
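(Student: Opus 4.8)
The plan is to induct on the number $n$ of variables, peeling off one variable at a time. Write $\mathcal{D}_m:=\mathcal{D}(k[x_1,\dots,x_m],k)$ and let $\Omega^\bullet_m(-)$ denote the de Rham complex built from $\delta_1,\dots,\delta_m$. The base case $n=0$ is immediate: $\mathcal{D}_0=k$, a holonomic $\mathcal{D}_0$-module is a finite-dimensional $k$-vector space, and $\Omega^\bullet(M)$ is just $M$ sitting in degree $0$. For the inductive step, restrict the $\mathcal{D}_n$-action on $M$ along the inclusion $\mathcal{D}_{n-1}\hookrightarrow\mathcal{D}_n$ and split each term of $\Omega^\bullet(M)$ according to whether $dx_n$ occurs, giving $\Omega^r_n(M)\cong\Omega^r_{n-1}(M)\oplus\Omega^{r-1}_{n-1}(M)$. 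Inspecting the differential, one sees that $\Omega^\bullet_n(M)$ is, up to signs, the total complex of a double complex with two columns, both equal to $\Omega^\bullet_{n-1}(M)$, whose internal differential is the $(n-1)$-variable de Rham differential and whose one horizontal differential is $\delta_n$. Since $\delta_n$ commutes with $x_1,\dots,x_{n-1},\delta_1,\dots,\delta_{n-1}$, it is $\mathcal{D}_{n-1}$-linear, so $K:=\ker(\delta_n\colon M\to M)$ and $N:=\coker(\delta_n\colon M\to M)$ are $\mathcal{D}_{n-1}$-modules.

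Next I would run the spectral sequence of this double complex obtained by taking the cohomology of the two-term complexes $\Omega^b_{n-1}(M)\xrightarrow{\ \delta_n\ }\Omega^b_{n-1}(M)$ first. Because $\delta_n$ acts coordinatewise on $\Omega^b_{n-1}(M)\cong M^{\oplus\binom{n-1}{b}}$, this first page consists of the two complexes $\Omega^\bullet_{n-1}(K)$ and $\Omega^\bullet_{n-1}(N)$ with the induced $(n-1)$-variable de Rham differentials (which descend because $\delta_1,\dots,\delta_{n-1}$ commute with $\delta_n$), so the second page is
\[ E_2^{b,0}=H^b\bigl(\Omega^\bullet_{n-1}(K)\bigr),\qquad E_2^{b,1}=H^b\bigl(\Omega^\bullet_{n-1}(N)\bigr); \]
the double complex is bounded in each total degree, so this spectral sequence converges to $H^\bullet(\Omega^\bullet_n(M))$, and having only two rows it degenerates at $E_3$. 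Hence, as soon as we know that $K$ and $N$ are holonomic $\mathcal{D}_{n-1}$-modules, the inductive hypothesis makes the entire $E_2$-page a finite-dimensional $k$-space, so $E_\infty$, and therefore its abutment $H^p(\Omega^\bullet_n(M))$, is finite-dimensional. This completes the induction modulo the holonomicity claim.

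The step I expect to be the main obstacle is precisely that claim: if $M$ is holonomic over $\mathcal{D}_n$, then $\ker(\delta_n\colon M\to M)$ and $\coker(\delta_n\colon M\to M)$ are holonomic over $\mathcal{D}_{n-1}$; this is where the dimension theory recalled in the preliminaries must be brought to bear. I would fix a good filtration $\Gamma_\bullet$ of $M$ over $\mathcal{D}_n$ with respect to the Bernstein filtration, so that $v\mapsto\dim_k\Gamma_v$ is eventually a polynomial of degree $n$, equip $K$ with the induced filtration $\Gamma_vM\cap K$ and $N$ with the image filtration, and use the four-term exact sequence $0\to K\to M\xrightarrow{\delta_n}M\to N\to 0$ together with the fact that $\delta_n$ raises the Bernstein degree by exactly $1$. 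The delicate points are (i) verifying that these induced filtrations are \emph{good} over $\mathcal{D}_{n-1}$ (in particular that $K$ and $N$ are finitely generated $\mathcal{D}_{n-1}$-modules), which needs an Artin--Rees-type comparison, and (ii) estimating the resulting Hilbert functions to have degree at most $n-1$; granting these, the Bernstein inequality over $\mathcal{D}_{n-1}$ (a nonzero finitely generated $\mathcal{D}_{n-1}$-module has dimension at least $n-1$) forces $K$ and $N$ each to be zero or holonomic over $\mathcal{D}_{n-1}$, which closes the argument. Conceptually this lemma is the statement that the $\mathcal{D}$-module direct image along the projection $\mathbb{A}^n_k\to\mathbb{A}^{n-1}_k$ preserves holonomicity, and iterating it down to $\mathbb{A}^0_k$ is exactly the computation of the de Rham cohomology of $M$.
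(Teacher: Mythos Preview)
The paper does not actually prove this theorem: it is quoted verbatim as Theorem~6.1 in Chapter~1 of Bj\"ork's book \cite{bjork} and used as a black box. So there is nothing in the paper to compare your argument against beyond the citation.

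That said, your outline is essentially the classical proof that appears in Bj\"ork. The reduction to two columns via the splitting $\Omega^r_n(M)\cong\Omega^r_{n-1}(M)\oplus\Omega^{r-1}_{n-1}(M)$, the two-row spectral sequence with $E_2$-page $H^\bullet(\Omega^\bullet_{n-1}(K))$ and $H^\bullet(\Omega^\bullet_{n-1}(N))$, and the degeneration at $E_3$ are all correct and standard. You have also correctly isolated the only nontrivial input: the lemma that $K=\ker(\delta_n\colon M\to M)$ and $N=\coker(\delta_n\colon M\to M)$ are holonomic over $\mathcal{D}_{n-1}$ whenever $M$ is holonomic over $\mathcal{D}_n$. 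Your filtration sketch for this lemma is the right idea; the point you flag as delicate---that the induced filtrations on $K$ and $N$ are good over $\mathcal{D}_{n-1}$, so that in particular $K$ and $N$ are finitely generated $\mathcal{D}_{n-1}$-modules---is genuinely the heart of the matter and does require an Artin--Rees argument in the associated graded. Once one has good filtrations, the exact sequence $0\to K\to M\xrightarrow{\delta_n}M\to N\to 0$ together with $\delta_n(\Gamma_v)\subseteq\Gamma_{v+1}$ yields $\dim_k(\Gamma_vK)+\dim_k(\Gamma_vN)\leq\dim_k\Gamma_{v+1}-\dim_k\Gamma_v$, which is eventually a polynomial of degree $n-1$; combined with Bernstein's inequality over $\mathcal{D}_{n-1}$ this forces holonomicity. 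So your plan is sound and, once the good-filtration step is written out, complete.
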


\subsection{Preliminaries on Spectral Sequences}
The following is a synopsis of necessary definitions and results from Weibel's introduction to the subject \cite{weib}.
\begin{definition}
A cohomology spectral sequence in an abelian category $\mathcal A$ is a family of objects $\{E^{p,q}_{r}\}$ defined for all integers $p,q$, and for all integers $r>a$ for some $a\geq 0$, with differential maps 
$d_r^{p,q}:E^{p,q}_{r}\rightarrow E^{p+r,q-r+1}_r$, with the property that $E_{r+1}^{p,q}$ is isomorphic to the cohomology module $\ker (d_r^{p,q}:E^{p,q}_{r}\rightarrow E^{p+r,q-r+1}_r)/\im (d_r^{p-r,q+r-1}:E^{p-r,q+r-1}_{r}\rightarrow E^{p,q}_r)$.
\end{definition}

The collection of objects in the spectral sequence of the form $E^{p,q}_{r}$ for fixed $r$, together with their associated differentials are referred to as the $E_r$-{\it page} of the spectral sequence.

The spectral sequence in this paper is an example of a {\it first-quadrant spectral sequence}, so named because the only nonzero terms are those in the first quadrant.  That is, $E^{p,q}_{r}=0$ for $p<0$ or $q<0$.  Since for any index in the spectral sequence, one can choose $r$ large enough that the differentials mapping to and from $E^{p,q}_{r}$ are maps to or from positions not in the first quadrant, such spectral sequences have the property that for any fixed $p, q$, there is some $r$ such that $E^{p,q}_{k}=E^{p,q}_{r}$ for $k>r$.  We write $E^{p,q}_{\infty}$ for the entry at that $r$.  
 
A spectral sequence is said to converge to its abutment $H^*$ if $E^{p,q}_{\infty}$ give the quotients of a finite filtration of $H^{*}$.  

Double complexes are a common source of spectral sequences. Namely, let $C^{\bullet, \bullet}$ be a first quadrant double complex (i.e. $C^{p,q}=0$ for $p<0$, or $q<0$) with vertical differentials $d_{ver}^{p,q}:C^{p,q}\to C^{p,q+1}$ and horizontal differentials $d_{hor}^p:C^{p,q}\to C^{p+1,q}$. This yields a spectral sequence with $E_0^{p,q}=C^{p,q}$ and $d_0^{p,q}=d_{ver}^{p,q}$ where $d_{ver}^{p,q}:C^{p,q}\to C^{p, q+1}$ is the vertical differential of the double complex, while on the $E_1$ page, $E_1^{p,q}=H^q(C^{p,\bullet})$, the $q$-th cohomology of the $p$-th column, and $d_1:E_1^{p,q}\to E_1^{p+1,q}$ is induced by the horizontal differential in the double complex (this is known as the column-filtered spectral sequence of $C^{\bullet,\bullet}$; there also is a row-filtered spectral sequence which we will not need). This spectral sequence abuts to $H^\bullet({\rm Tot}(C^{\bullet,\bullet}))$, the cohomology of the total complex of $C^{\bullet, \bullet}$, in the following sense. Let $C^{\bullet,\bullet} _{\geq t}$ be $C^{\bullet,\bullet}$ truncated at column $t$, i.e. the double subcomplex of $C^{\bullet,\bullet}$ defined by $C^{p,q}_{\geq t}=C^{p,q}$ if $p\geq t$ and $C^{p,q}_{\geq t}=0$ if $p<t$. Then $C^{\bullet,\bullet}_{\geq t+1}$ ia a double subcomplex of $C^{\bullet,\bullet}_{\geq t}$ and there is a filtration $\dots\supset F^{t}H^\bullet\supset F^{t+1}H^\bullet\supset\dots$ on $H^\bullet({\rm Tot}(C^{\bullet,\bullet}))$, namely, $F^tH^\bullet$ is the image of $H^\bullet({\rm Tot}(C^{\bullet,\bullet}_{\geq t}))$ under the natural map induced by the natural inclusion Tot$(C^{\bullet,\bullet}_{\geq t})\subset {\rm Tot}(C^{\bullet,\bullet})$ and $F^{t}H^n/F^{t+1}H^n\cong E_\infty^{t,n-t}$.

Let $F:\mathcal A\to \mathcal B$ be a left-exact additive functor from an abelian category $\mathcal A$ to an abelian category $\mathcal B$ and let $K^\bullet$ be a complex in $\mathcal A$ concentrated in degrees $p\geq 0$. The hyperderived functors $R^qF(K^\bullet)$ are defined as $H^q({\rm Tot} (F(L^{\bullet,\bullet})))$, the $q$-th cohomology of the total complex of $F(L^{\bullet,\bullet})$ where $L^{\bullet,\bullet}$ is a Cartan-Eilenberg resolution of $K^\bullet$; these hyperderived functors are independent of the particular Cartan-Eilenberg resolution. The spectral sequence of the double complex $F(L^{\bullet,\bullet})$ described in the preceding paragraph also is independent of the particular Cartan-Eilenberg resolution (starting from the $E_1$ page) and it abuts to $R^\bullet F(K^\bullet)$. The $E_1^{p,q}$ entry is $E_1^{p,q}=H^q(F(L^{p,\bullet}))=R^qF(K^{p})$, the $q$-th derived functor of $F$ evaluated at $K^p$.

A morphism of spectral sequences $f:\{E^{\bullet,\bullet}_\bullet\} \rightarrow \{{E'}^{\bullet,\bullet}_\bullet\}$ consists of homomorphisms $f^r:E^{\bullet, \bullet}_r \rightarrow E'^{\bullet,\bullet}_r$, that commute with the differentials in the spectral sequence.  

\begin{lemma}[Lemma 5.2.4 in \cite{weib}]\label{mapping} Let $f:\{E^{p,q}_r\} \rightarrow \{E'^{p,q}_r\}$ be a morphism of spectral sequences such that $f^{r_0}:E^{p,q}_{r_0} \rightarrow E'^{p,q}_{r_0}$ is an isomorphism for some $r_0$, then $f^r$ is an isomorphism for all $r>r_0$.  In particular $f^{\infty}: E^{p,q}_{\infty} \rightarrow E'^{p,q}_{\infty}$ is an isomorphism.
\end{lemma}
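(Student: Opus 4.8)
The plan is to argue by induction on $r$, using the one structural fact that defines a spectral sequence — that $E_{r+1}^{p,q}$ is the cohomology of $(E_r^{\bullet,\bullet}, d_r)$ at bidegree $(p,q)$ — together with the compatibility built into the notion of a morphism of spectral sequences, namely that $f^{r+1}$ is the map induced by $f^r$ on this cohomology. (The definition recorded above states only that each $f^r$ commutes with $d_r$; the full definition from \cite{weib} additionally requires this compatibility, and it is what the argument rests on.) The base case $r = r_0$ is the hypothesis.

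For the inductive step, suppose $f^r$ is an isomorphism at every bidegree for some $r \geq r_0$. Viewing the $E_r$ page equipped with $d_r$ as a bigraded cochain complex (recall $d_r \circ d_r = 0$), the condition that $f$ is a morphism of spectral sequences says precisely that $f^r$ is a morphism of cochain complexes: the squares
\[
\begin{CD}
E_r^{p,q} @>{d_r}>> E_r^{p+r,q-r+1} \\
@V{f^r}VV @VV{f^r}V \\
E_r'^{p,q} @>{d_r'}>> E_r'^{p+r,q-r+1}
\end{CD}
\]
commute. Because the vertical maps are isomorphisms of the underlying objects and intertwine the horizontal differentials, $f^r$ is an isomorphism of cochain complexes; concretely, it carries $\ker d_r^{p,q}$ isomorphically onto $\ker d_r'^{p,q}$ and $\im d_r^{p-r,q+r-1}$ isomorphically onto $\im d_r'^{p-r,q+r-1}$. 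The forward inclusions follow from $d_r' f^r = f^r d_r$, and the reverse inclusions from applying the same identity to $(f^r)^{-1}$ together with the injectivity of $f^r$.

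Passing to the quotient, $f^r$ therefore induces an isomorphism
\[
\ker d_r^{p,q}/\im d_r^{p-r,q+r-1} \;\xrightarrow{\ \sim\ }\; \ker d_r'^{p,q}/\im d_r'^{p-r,q+r-1}.
\]
By the defining isomorphism $E_{r+1}^{p,q} \cong \ker d_r^{p,q}/\im d_r^{p-r,q+r-1}$ and the compatibility clause, this induced map is exactly $f^{r+1}$, so $f^{r+1}$ is an isomorphism and the induction closes. The final assertion follows at once: since the spectral sequences are first-quadrant, each bidegree stabilizes, so $E_\infty^{p,q} = E_R^{p,q}$ and $f^\infty = f^R$ for all sufficiently large $R$, whence $f^\infty$ is an isomorphism.

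I expect no genuine obstacle here; the statement is foundational homological algebra. The only point requiring attention is conceptual rather than computational — that the inductive step uses the compatibility $f^{r+1} = H(f^r)$, not merely the commutation of $f^{r+1}$ with $d_{r+1}$. Were $f^{r+1}$ an arbitrary map commuting with $d_{r+1}$, it need not be an isomorphism even when $f^r$ is; so identifying $f^{r+1}$ with the map induced on cohomology by $f^r$ is the crux on which everything turns.
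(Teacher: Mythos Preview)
Your argument is correct and is exactly the standard proof: induct on $r$, using that an isomorphism of cochain complexes induces an isomorphism on cohomology, together with the compatibility clause $f^{r+1} = H(f^r)$. The paper does not supply its own proof of this lemma at all---it simply cites it as Lemma~5.2.4 in \cite{weib}---so there is nothing to compare beyond noting that what you wrote is essentially Weibel's argument.

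One small remark: the lemma as stated does not assume the spectral sequences are first-quadrant, so your final sentence invokes a hypothesis not present in the statement. In the paper's context this is harmless since all spectral sequences under consideration are first-quadrant, but if you want the proof to match the lemma as written you should either add that hypothesis explicitly or phrase the conclusion about $E_\infty$ more carefully (e.g., assuming the spectral sequences are bounded or otherwise convergent).
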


We will also need the following Lemma, which is Lemma 2.13 in \cite{switala}:

\begin{lemma}\label{canusesequence}
Let $\mathcal A$ be an Abelian category with enough injectives, $\mathcal B$ another Abelian category, and $F:\mathcal A \rightarrow \mathcal B$ a left-exact additive functor.  Let $K^\bullet$ be a complex in $\mathcal A$ that is concentrated in degrees $p \geq 0$ and let $L^{\bullet, \bullet}$ be a double complex in $\mathcal A$ whose objects $L^{p,q}$ are $F$-acyclic.  Suppose additionally that for each $p$, $L^{p, \bullet}$ is a resolution of $K^p$.  Then the spectral sequence for the hyperderived functors of $F$ applied to $K^\bullet$, which starts $E^{p,q}_1 = R^qF(K^p)$ and abuts to ${\bf R}^{p+q}F(K^{\bullet})$ is isomorphic to the column-filtered spectral sequences of the double complex $F(L^{\bullet, \bullet})$
\end{lemma}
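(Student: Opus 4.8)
The plan is to upgrade, to the level of spectral sequences, the familiar fact that $F$-acyclic resolutions compute derived functors. Since $\mathcal A$ has enough injectives and $K^\bullet$ is bounded below, fix a Cartan--Eilenberg resolution $I^{\bullet,\bullet}$ of $K^\bullet$. Its entries $I^{p,q}$ are injective, hence $F$-acyclic, and by the facts recalled above the column-filtered spectral sequence of $F(I^{\bullet,\bullet})$ is precisely the hyperderived-functor spectral sequence abutting to ${\bf R}^\bullet F(K^\bullet)$, independently of the chosen $I^{\bullet,\bullet}$ from the $E_1$ page on. Both $L^{\bullet,\bullet}$ and $I^{\bullet,\bullet}$ are first-quadrant double complexes --- the columns $L^{p,\bullet}$ and $I^{p,\bullet}$ vanish in negative vertical degree and everything vanishes for $p<0$ --- so both column-filtered spectral sequences converge. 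It therefore suffices to produce a morphism of double complexes $\phi\colon L^{\bullet,\bullet}\to I^{\bullet,\bullet}$ lying over $\mathrm{id}_{K^\bullet}$, meaning that $\phi$ commutes with the augmentations $K^p\to L^{p,0}$ and $K^p\to I^{p,0}$, and then to verify that $F(\phi)$ induces an isomorphism of the associated spectral sequences.

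I would build $\phi$ by a double induction: an outer induction on the column index $p$ and, for each $p$, an inner induction on the row index $q$, at every stage using the injectivity of the target $I^{p,q}$ together with the exactness of the columns of $L^{\bullet,\bullet}$ and $I^{\bullet,\bullet}$ to solve the resulting lifting problem. This is the Cartan--Eilenberg comparison theorem in the relative form that simultaneously keeps track of the horizontal differentials and the augmentations, and I expect it to be the main technical obstacle: the real content is the bookkeeping required to produce a single family of maps commuting at once with $d_{ver}$, with $d_{hor}$, and with the augmentations, the point being that at each step the data to be extended is already defined on the image of something constructed earlier, and the inductive compatibilities make the obstruction to extending it vanish because $I^{p,q}$ is injective and the relevant complex is exact.

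Granting $\phi$, apply the additive functor $F$ to obtain a morphism of double complexes $F(\phi)\colon F(L^{\bullet,\bullet})\to F(I^{\bullet,\bullet})$ compatible with the column truncations, hence a morphism of the associated column-filtered spectral sequences together with a compatible map of the finitely filtered abutments. On the $E_1$ page this morphism is $H^q(F(\phi^{p,\bullet}))\colon H^q(F(L^{p,\bullet}))\to H^q(F(I^{p,\bullet}))$. For each fixed $p$ the chain map $\phi^{p,\bullet}\colon L^{p,\bullet}\to I^{p,\bullet}$ is a quasi-isomorphism of bounded-below complexes (both resolve $K^p$ and $\phi$ respects the augmentations), so its mapping cone, whose $q$-th term is $L^{p,q+1}\oplus I^{p,q}$, is an exact bounded-below complex of $F$-acyclic objects. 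Since $F$ carries an exact bounded-below complex of $F$-acyclics to an exact complex (split it into short exact sequences of cocycles and induct on degree), $F$ of this cone is exact, i.e.\ $F(\phi^{p,\bullet})$ is a quasi-isomorphism and $H^q(F(\phi^{p,\bullet}))$ is an isomorphism for every $q$. Thus $F(\phi)$ is an isomorphism on $E_1$, hence by Lemma~\ref{mapping} on every $E_r$ with $r\geq 2$ and on $E_\infty$; together with the compatibility with the finite filtrations this identifies the two spectral sequences and their abutments. Finally, the stated form $E_1^{p,q}=R^qF(K^p)$ of the $E_1$ term is the isomorphism $H^q(F(L^{p,\bullet}))\cong R^qF(K^p)$ obtained in the same way by comparing $L^{p,\bullet}$ with an injective resolution of $K^p$, so the identification is the expected one.
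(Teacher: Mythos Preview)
The paper does not supply its own proof of this lemma; it is simply quoted as Lemma~2.13 of \cite{switala} and left unproved. Your argument is correct and is the standard one: produce a comparison morphism of double complexes $\phi\colon L^{\bullet,\bullet}\to I^{\bullet,\bullet}$ over $\mathrm{id}_{K^\bullet}$, apply $F$, and show that $F(\phi)$ is an $E_1$-isomorphism via the mapping-cone trick (an exact bounded-below complex of $F$-acyclics stays exact under $F$), then invoke Lemma~\ref{mapping}.

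The one place to be careful is the construction of $\phi$ as a morphism of \emph{double} complexes rather than merely a collection of column-wise lifts. Getting the horizontal and vertical compatibilities to hold simultaneously uses the full Cartan--Eilenberg structure of $I^{\bullet,\bullet}$ (the splittings of each row into injective resolutions of boundaries, cycles, and cohomology), not just the injectivity of the individual $I^{p,q}$ and the exactness of the columns. You correctly flag this step as the main obstacle and identify it with the Cartan--Eilenberg comparison theorem, so there is no genuine gap; but in a written version you would want either to cite that theorem explicitly or to indicate how the CE splittings are what make the double induction close up.
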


\begin{lemma}[see page 30 in \cite{grothendieck}]\label{dbtoss}
A morphism of double complexes induces a morphism of corresponding spectral sequences.
\end{lemma}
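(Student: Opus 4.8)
The plan is to reduce the statement to the functoriality of the spectral sequence of a filtered complex, and then to establish that functoriality from the explicit description of the pages in terms of approximate cycles. First I would observe that a morphism of double complexes $\phi\colon C^{\bullet,\bullet}\to D^{\bullet,\bullet}$, that is a family $\phi^{p,q}\colon C^{p,q}\to D^{p,q}$ commuting with both $d_{hor}$ and $d_{ver}$, induces a map of total complexes $g:=\mathrm{Tot}(\phi)\colon \mathrm{Tot}(C^{\bullet,\bullet})\to\mathrm{Tot}(D^{\bullet,\bullet})$, given in degree $n$ by $\bigoplus_{p+q=n}\phi^{p,q}$. Since $\phi$ commutes with both differentials and preserves the column index $p$, the map $g$ commutes with the total differential $d$ (the sign $(-1)^p$ appearing in $d$ is preserved because $p$ is), so $g$ is a chain map; and since each $\phi^{p,q}$ lands in the same column, $g$ carries $F^t\,\mathrm{Tot}(C^{\bullet,\bullet})=\bigoplus_{p\geq t}C^{p,\bullet}$ into $F^t\,\mathrm{Tot}(D^{\bullet,\bullet})$. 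Thus $g$ is a filtered chain map for the column filtrations that define the two spectral sequences, and the problem becomes showing that a filtered chain map induces a morphism of the associated spectral sequences.

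For this I would use the explicit description of the column-filtered spectral sequence. Writing $K^\bullet=\mathrm{Tot}(C^{\bullet,\bullet})$ with total differential $d$ and column filtration $F^\bullet$, set $Z_r^{p,q}=\{x\in F^pK^{p+q} : dx\in F^{p+r}K^{p+q+1}\}$; one then has $E_r^{p,q}\cong Z_r^{p,q}/\bigl(Z_{r-1}^{p+1,q-1}+d\,Z_{r-1}^{p-r+1,\,q+r-2}\bigr)$ with $d_r$ induced by $d$. One checks that this recovers $E_0^{p,q}=C^{p,q}$ and $E_1^{p,q}=H^q(C^{p,\bullet})$, in agreement with the construction in the preliminaries. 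Let ${Z'}_r^{p,q}$ and ${E'}_r^{p,q}$ denote the corresponding groups for $D^{\bullet,\bullet}$.

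The heart of the argument is then a direct bookkeeping check. Because $g(F^pK^m)\subseteq F^p{K'}^m$ and $g\circ d=d'\circ g$, if $x\in Z_r^{p,q}$ then $g(x)\in F^p{K'}^{p+q}$ with $d'g(x)=g(dx)\in F^{p+r}{K'}^{p+q+1}$, so $g(Z_r^{p,q})\subseteq {Z'}_r^{p,q}$; the same reasoning shows $g$ carries each summand $Z_{r-1}^{p+1,q-1}$ and $d\,Z_{r-1}^{p-r+1,\,q+r-2}$ of the denominator into its primed counterpart. Hence $g$ descends to well-defined maps $f^r\colon E_r^{p,q}\to{E'}_r^{p,q}$, and because all the $f^r$ are induced by the single map $g$ on a common tower of subquotients, they are automatically compatible with the identifications $E_{r+1}\cong H(E_r,d_r)$. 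Finally, for a class $[x]\in E_r^{p,q}$ one computes $d_r'f^r[x]=[d'g x]=[g\,dx]=f^r[dx]=f^rd_r[x]$, so the $f^r$ commute with the differentials and assemble into a morphism of spectral sequences.

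I expect the main obstacle to be the careful verification, uniformly in $r$, that $g$ respects both the numerator $Z_r^{p,q}$ and every term of the denominator, so that each $f^r$ is well defined independently of the choice of representative; this is precisely where the filtration-degree bookkeeping must be carried out with care. It is worth noting why the seemingly easier route fails: inducting on $r$ using only that $E_{r+1}$ is the cohomology of $(E_r,d_r)$ and that cohomology is functorial does produce the maps $f^r$, but it does not show that $f^{r+1}$ commutes with $d_{r+1}$, since $d_{r+1}$ is defined by a zig-zag rather than as the image of a single fixed functor; the explicit cycle description above is exactly what sidesteps this difficulty. The identical argument applies to the row-filtered spectral sequence by the symmetry $p\leftrightarrow q$, although only the column-filtered version is used in this paper.
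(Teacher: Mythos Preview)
Your proposal is correct. Note, however, that the paper does not actually prove this lemma: it is stated with a bare citation to page~30 of \cite{grothendieck} and no argument is given. What you have written is essentially the standard proof one finds in that reference (or in \cite{weib}): pass to the total complex with its column filtration, describe each page via the approximate-cycle groups $Z_r^{p,q}$, and check that a filtered chain map preserves numerator and denominator so that the induced maps $f^r$ are well defined and commute with $d_r$. Your remark about why the naive induction on $r$ is insufficient (because $d_{r+1}$ is defined by a zig-zag, not functorially from $d_r$) is a useful clarification that the cited sources typically leave implicit. In short, there is nothing to compare against in the paper itself; your argument supplies exactly the content the paper outsources to the literature.
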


\begin{lemma}\label{cptoss}
A morphism of complexes induces a morphism of corresponding spectral sequences for a hyperderived functor.
\end{lemma}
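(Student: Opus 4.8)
The strategy is to reduce the claim to Lemma~\ref{dbtoss} by resolving the given morphism of complexes. Let $\phi\colon K^\bullet\to\tilde K^\bullet$ be a morphism of complexes in an abelian category $\mathcal A$ with enough injectives, both concentrated in degrees $p\geq 0$, and let $F\colon\mathcal A\to\mathcal B$ be a left-exact additive functor. Choose Cartan--Eilenberg resolutions $L^{\bullet,\bullet}$ of $K^\bullet$ and $\tilde L^{\bullet,\bullet}$ of $\tilde K^\bullet$. By the definition of the hyperderived functors recalled in Section~2, the spectral sequence attached to $K^\bullet$ (resp. to $\tilde K^\bullet$) is the column-filtered spectral sequence of the double complex $F(L^{\bullet,\bullet})$ (resp. $F(\tilde L^{\bullet,\bullet})$), and, again as recalled there, from the $E_1$ page onward it does not depend on the chosen Cartan--Eilenberg resolution.

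First I would lift $\phi$ to a morphism of double complexes $\Phi\colon L^{\bullet,\bullet}\to\tilde L^{\bullet,\bullet}$ compatible with the augmentations $K^\bullet\to L^{\bullet,0}$ and $\tilde K^\bullet\to\tilde L^{\bullet,0}$. This is the Cartan--Eilenberg comparison theorem: one constructs $\Phi$ one horizontal degree at a time, simultaneously extending the maps already chosen on the injective resolutions of the coboundaries, the cocycles, and the cohomology of the horizontal differentials of $K^\bullet$, using the ordinary comparison theorem for injective resolutions together with the injectivity of the terms of $\tilde L^{\bullet,\bullet}$ (see \cite{weib}). Applying the additive functor $F$ yields a morphism of double complexes $F(\Phi)\colon F(L^{\bullet,\bullet})\to F(\tilde L^{\bullet,\bullet})$, and Lemma~\ref{dbtoss} converts this into a morphism between the associated column-filtered spectral sequences, i.e. between the hyperderived-functor spectral sequences of $K^\bullet$ and of $\tilde K^\bullet$.

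It remains to check that this morphism of spectral sequences is, from the $E_1$ page on, independent of the auxiliary choices, so that it is genuinely attached to $\phi$ alone. On the $E_1$ page one has $E_1^{p,q}=H^q\bigl(F(L^{p,\bullet})\bigr)$, and the induced map is the one coming from $F$ applied to the restriction of $\Phi$ to the $p$-th column, which is a chain map $L^{p,\bullet}\to\tilde L^{p,\bullet}$ between injective resolutions of $K^p$ and $\tilde K^p$ lifting $\phi^p$; any two such lifts are chain homotopic, so the resulting map on $E_1^{p,q}$ depends only on $\phi^p$. Hence the map on the $E_1$ page is independent of the choice of $\Phi$, and consequently so is the map on every page $E_r$ with $r\geq 1$; combined with the independence of the spectral sequences themselves from the chosen Cartan--Eilenberg resolutions, this produces a well-defined morphism of spectral sequences induced by $\phi$ (one which, moreover, abuts to the map $R^\bullet F(K^\bullet)\to R^\bullet F(\tilde K^\bullet)$ induced by $\phi$). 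The only step here that is not purely formal is the construction of $\Phi$, i.e. the Cartan--Eilenberg comparison theorem; I expect this to be the main point to get right, although it is entirely standard, and everything else is an assembly of Lemma~\ref{dbtoss} together with the facts about hyperderived functors recorded in Section~2.
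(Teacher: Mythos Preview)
Your proposal is correct and follows essentially the same route as the paper: choose Cartan--Eilenberg resolutions of the two complexes, lift the given morphism to a map of double complexes (unique up to homotopy), apply $F$, and invoke Lemma~\ref{dbtoss}; well-definedness follows since homotopic lifts induce the same maps on cohomology. The only cosmetic difference is that the paper cites Grothendieck for the homotopy-uniqueness of the lift and disposes of well-definedness in one line, whereas you spell out the comparison theorem and verify independence column-by-column on the $E_1$ page.
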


\begin{proof}
Let $f:C^{\bullet} \rightarrow C'^{\bullet}$ be a morphism of complexes.  Let $\mathcal J^{\bullet,\bullet}$ and $\mathcal J'^{\bullet, \bullet}$ be Cartan-Eilenberg resolutions of $C^{\bullet}$ and $C'^{\bullet}$, respectively.  Then $f$ induces a map $\mathcal J^{\bullet,\bullet} \rightarrow \mathcal J'^{\bullet, \bullet}$ that is unique up to homotopy (see page 33 in \cite{grothendieck}), which induces a map $F(\mathcal J^{\bullet,\bullet}) \rightarrow F(\mathcal J'^{\bullet, \bullet})$ that is unique up to homotopy.  Thus from \autoref{dbtoss} this induces a morphism of spectral sequences for the hyperderived functors of a functor $F$ applied to $C^{\bullet}$ and $C'^{\bullet}$.  Chain homotopic maps induce the same morphisms on cohomology, so this is well-defined.
\end{proof}

We use \autoref{canusesequence} to construct the Hodge-de Rham spectral sequences for our embeddings.  Given a de Rham complex, we take injective resolutions to obtain a double complex, and use its column-filtered spectral sequence.

Finally we recall the notion of degree-shifted morphisms of spectral sequences.

\begin{definition}
[see Definition 2.1 in \cite{switala}] Let $E$ and $E'$ be two spectral sequences with respective differentials $d$ and $d'$. If $a,b\in \mathbb Z$, a morphism $u:E\to E'$ of bidegree $(a,b)$ is a family of morphisms $u^{p.q}_r:E^{p,q}_r\to E'^{p+a,q+b}_r$ such that the $u_r^{p,q}$ are compatible with the differentials and $u^{p,q}_{r+1}$ is induced on cohomology by $u_r^{p,q}$.
\end{definition}

As is pointed out in \cite{switala} right after Definition 2.1, a degree-shifted analogue of \autoref{mapping} also holds. It is not hard to see that degree-shifted versions of \autoref{dbtoss} and \autoref{cptoss} hold as well (that is, a morphism of double complexes of bidegree $(a,b)$ induces a morphism of corresponding spectral sequences of the same degree, and a morphism of complexes $f:C^\bullet\to \mathcal C^\bullet$ of degree $n$, i.e. $f$ sends $C^t$ to $\mathcal C^{t+n}$ for every $t$,  induces a morphism of corresponding spectral sequences of bidegree $(0,n)$).

\section{A morphism of spectral sequences}
The main result of this section is the following.
\begin{prop}\label{morphism}
Let $Y$ be an affine variety, and let $Y \rightarrow X$ be an embedding of $Y$ into a smooth affine variety $X$.  Let $X \rightarrow \mathbb{A}^n$ be an embedding of $X$ into affine space.
There is a morphism of spectral sequences between the Hodge-de Rham local hypercohomology spectral sequence associated to the embedding $Y\rightarrow X$, and the Hodge-de Rham local hypercohomology spectral sequence associated to the embedding $Y \rightarrow \mathbb{A}^n$ given by the composition $Y \rightarrow X \rightarrow \mathbb {A}^n$.  This morphism is of bidegree shift $(n-s, n-s)$, where $s=\dim X$.
\end{prop}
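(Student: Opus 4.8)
The plan is to realize both Hodge--de-Rham spectral sequences as column-filtered spectral sequences of explicit double complexes and then to exhibit between them a morphism of bidegree $(c,c)$, where $c=n-s$; the substance of the argument is the construction of this morphism as the Gysin (fundamental-class) map of the regular closed immersion $X\hookrightarrow\mathbb A^n$.

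First I would fix notation: write $T=k[x_1,\dots,x_n]$, let $\mathfrak a\subseteq T$ be the ideal of $X$ so that $A:=T/\mathfrak a$ is the coordinate ring of $X$, and let $\mathfrak b\subseteq T$ and $I=\mathfrak b/\mathfrak a\subseteq A$ be the ideals of $Y$ in $\mathbb A^n$ and in $X$ respectively. By \autoref{canusesequence}, the spectral sequence attached to $Y\hookrightarrow\mathbb A^n$ is the column-filtered spectral sequence of $\Gamma_{\mathfrak b}(\mathcal I^{\bullet,\bullet})$ for a Cartan--Eilenberg resolution $\mathcal I^{\bullet,\bullet}$ of the de Rham complex $\Omega^\bullet(T)$, and the one attached to $Y\hookrightarrow X$ is that of $\Gamma_I(\mathcal J^{\bullet,\bullet})$ for a Cartan--Eilenberg resolution $\mathcal J^{\bullet,\bullet}$ of $\Omega^\bullet(A)$ over $A$. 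I would also record two standard facts used repeatedly: an injective $A$-module, viewed as a $T$-module, is $\Gamma_{\mathfrak b}$-acyclic (independence of base for local cohomology), and $R\Gamma_{\mathfrak b}=R\Gamma_I\circ R\Gamma_{\mathfrak a}$ because $\mathfrak a\subseteq\mathfrak b$. Together with the degree-shifted forms of \autoref{dbtoss} and \autoref{cptoss}, these reduce the proposition to constructing a suitable morphism of complexes of $T$-modules $\Omega^\bullet(A)\to H^c_{\mathfrak a}\bigl(\Omega^{\bullet+c}(T)\bigr)$ and then reading it correctly on the level of spectral sequences.

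Next I would construct the fundamental-class map. Since $X$ is smooth, $X\hookrightarrow\mathbb A^n$ is a regular closed immersion of codimension $c$; hence the conormal sequence of \autoref{conormal} is short exact, $\mathfrak a/\mathfrak a^2$ is a projective $A$-module of rank $c$ that is a direct summand of the free module $A\otimes_T\Omega^1(T)$, and --- this is where smoothness is essential --- $H^j_{\mathfrak a}(T)=0$ for $j\neq c$. The filtration of $\bigwedge^{p+c}\!\bigl(A\otimes_T\Omega^1(T)\bigr)=A\otimes_T\Omega^{p+c}(T)$ induced by the conormal sequence has as its bottom nonzero piece the submodule $\bigwedge^c(\mathfrak a/\mathfrak a^2)\otimes_A\Omega^p(A)$; combining this inclusion with the canonical pairing $\bigwedge^c(\mathfrak a/\mathfrak a^2)\otimes_A\bigwedge^c(\mathfrak a/\mathfrak a^2)^{\vee}\cong A$ and with the local-duality identification $H^c_{\mathfrak a}\bigl(\Omega^{p+c}(T)\bigr)\cong\bigl(A\otimes_T\Omega^{p+c}(T)\bigr)\otimes_A\bigwedge^c(\mathfrak a/\mathfrak a^2)^{\vee}$ produces a canonical $T$-linear injection $\Omega^p(A)\hookrightarrow H^c_{\mathfrak a}\bigl(\Omega^{p+c}(T)\bigr)$ onto the $\mathfrak a$-socle. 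One then checks that these maps commute with the exterior differentials --- the assertion that the fundamental class is $d$-closed --- so that they assemble into a morphism of complexes $\Omega^\bullet(A)\to H^c_{\mathfrak a}\bigl(\Omega^{\bullet+c}(T)\bigr)$.

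Finally I would promote this to a morphism of spectral sequences. The vanishing $H^j_{\mathfrak a}(T)=0$ for $j\neq c$ makes the column-filtered spectral sequence of $\Gamma_{\mathfrak a}(\mathcal I^{\bullet,\bullet})$ degenerate, so that $R\Gamma_{\mathfrak a}\Omega^\bullet(T)$ is computed by the complex $H^c_{\mathfrak a}(\Omega^\bullet(T))$ placed in cohomological degree $c$; combining this with $R\Gamma_{\mathfrak b}=R\Gamma_I\circ R\Gamma_{\mathfrak a}$ and \autoref{canusesequence} exhibits the $Y\hookrightarrow\mathbb A^n$ spectral sequence, up to the bidegree shift $(c,c)$, as the $\Gamma_I$-hyperderived-functor spectral sequence of the complex $H^c_{\mathfrak a}\bigl(\Omega^{\bullet+c}(T)\bigr)$. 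The morphism of complexes built above then induces, by the degree-shifted version of \autoref{cptoss}, the asserted morphism of spectral sequences of bidegree $(n-s,n-s)$, and on abutments one recovers Hartshorne's embedding-independence isomorphism for de Rham homology. I expect the main obstacle to be exactly this last step: realizing the identification of the $Y\hookrightarrow\mathbb A^n$ spectral sequence with the hyperderived-functor spectral sequence of $H^c_{\mathfrak a}(\Omega^{\bullet+c}(T))$ at the level of double complexes (rather than merely in the derived category), keeping the truncations and index shifts exactly right, and checking at each stage both naturality (independence of the auxiliary resolutions, up to homotopy) and compatibility with the de Rham and the resolution differentials; smoothness of $X$ enters twice, once for exactness of the conormal sequence and once for the local-cohomology vanishing.
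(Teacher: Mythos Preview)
Your outline is essentially the paper's argument in different dress. Both reduce to (i) using $H^q_{\mathfrak a}=0$ for $q\neq c$ to identify, up to a row shift, the $Y\hookrightarrow\mathbb A^n$ spectral sequence with the $\Gamma_I$-hyperderived spectral sequence of $H^c_{\mathfrak a}(\Omega^\bullet(T))$, and (ii) building a degree-$c$ chain map $\Omega^\bullet(A)\to H^c_{\mathfrak a}(\Omega^{\bullet+c}(T))$ landing in the $\mathfrak a$-socle, out of the split conormal sequence. The main technical difference is that the paper first passes to the $J$-adic completion $\hat T$: formal smoothness of $R=A$ furnishes a ring section $R\hookrightarrow\hat T$, hence a $\hat T$-module splitting $\hat T\otimes_T\Omega_{T/k}=Q\oplus P$ with $Q=\hat T\otimes_R\Omega_{R/k}$ and $P/JP\cong J/J^2$, so that $\wedge^{c}P\otimes\wedge^\bullet Q$ is already a genuine subcomplex of $\hat T\otimes_T\Omega^\bullet(T)$ \emph{before} one tensors with $H^c_J$. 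Your Gysin formulation stays at the $A$-module level and is more conceptual; what the completion buys the paper is an explicit coordinate system $x_1,\dots,x_s,y_1,\dots,y_{c}$ (after further localizing and completing at a maximal ideal) in which the $d$-compatibility check---your ``the fundamental class is $d$-closed''---becomes a one-line computation rather than an appeal to a known but nontrivial fact. The paper also notes explicitly that the de Rham differential does \emph{not} preserve the $\mathfrak a$-socle on all of $H^c_{\mathfrak a}(\Omega^\bullet(T))$, only on the image of this particular map; you should be equally explicit about this, since it is the heart of the verification.

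One correction: your displayed ``local-duality identification'' $H^c_{\mathfrak a}(\Omega^{p+c}(T))\cong(A\otimes_T\Omega^{p+c}(T))\otimes_A\wedge^c(\mathfrak a/\mathfrak a^2)^\vee$ is false as written---the left side is not even finitely generated over $T$. What is true, and what you actually use, is that this describes the $\mathfrak a$-\emph{socle}: $\Hom_T(A,H^c_{\mathfrak a}(\Omega^{p+c}(T)))\cong\Ext^c_T(A,\Omega^{p+c}(T))\cong\wedge^c(\mathfrak a/\mathfrak a^2)^\vee\otimes_A(A\otimes_T\Omega^{p+c}(T))$. This is exactly the content of the paper's lemma identifying the $J$-annihilated part of $H^{n-s}_J(\hat T)\otimes\wedge^{n-s}P$ with $R$.
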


The proof results from a series of lemmas.  Throughout this section, let $T$ be the ring $\kk[x_1,...,x_n]$, the coordinate ring of $\mathbb A^n$.  Let $Y=V(I)$ be an affine variety with an embedding to a smooth affine variety $X=V(J)$ of dimension $s<n$, itself embedded into $\Spec T$, where $J \subset I \subset T$. We write $R$ for $T/J$ and $S$ for $T/I$. 

\begin{lemma}
Let $\hat{T}$ be the $J$-adic competion of $T$.  The natural map $T \rightarrow \hat{T}$ induces a map of de Rham complexes $\Omega^{\bullet}(T) \rightarrow \hat{T} \otimes_T \Omega^{\bullet}(T)$, given by $a \mapsto 1 \otimes a$.  According to \autoref{cptoss}, this map of complexes induces a morphism of spectral sequences corresponding to local hypercohomology of the de Rham complex of $T$ with support in $I$ and the local hypercohomology of the complex $\hat{T}\otimes_T \Omega^{\bullet}(T)$ with support in $I$.  This morphism of spectral sequences is an isomorphism starting on the $E_1$ page and is degree-preserving, i.e it is of bidegree shift (0,0).
\end{lemma}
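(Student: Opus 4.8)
The plan is to make sense of the target complex, apply \autoref{cptoss} to obtain the morphism of spectral sequences, reduce the isomorphism claim to the $E_1$ page, and then finish via \autoref{mapping}.

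First I would pin down the differential on $\hat T\otimes_T\Omega^\bullet(T)$. Writing $\hat T\otimes_T\Omega^p(T)=\bigoplus_{i_1<\dots<i_p}\hat T\,dx_{i_1}\wedge\dots\wedge dx_{i_p}$, one defines its differential by the same formula used for $\Omega^\bullet(T)$; this requires only that each $\delta_j$ extend to a $k$-linear endomorphism $\hat\delta_j$ of $\hat T$, which is automatic since the Leibniz rule gives $\delta_j(J^m)\subseteq J^{m-1}$, so $\delta_j$ is continuous for the $J$-adic topology and passes to the completion. One checks $(d_{\hat T})^2=0$ from the commutation $\hat\delta_i\hat\delta_j=\hat\delta_j\hat\delta_i$ (valid by density of $T$ in $\hat T$), and then $a\mapsto 1\otimes a$ is a degree-preserving morphism of complexes $\Omega^\bullet(T)\to\hat T\otimes_T\Omega^\bullet(T)$ because $\hat\delta_j$ restricts to $\delta_j$ on $T$. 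Applying the degree-preserving case of \autoref{cptoss} to the left-exact functor $\Gamma_I$ of sections supported on $V(I)$ then yields a morphism of bidegree shift $(0,0)$ between the two column-filtered hyperderived-functor spectral sequences named in the statement.

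By \autoref{canusesequence} (or directly from the general description of such spectral sequences), the $E_1$ terms are $E_1^{p,q}=H^q_I(\Omega^p(T))$ and ${E'}_1^{p,q}=H^q_I(\hat T\otimes_T\Omega^p(T))$, with the morphism on $E_1$ induced by $1\otimes(-)$, so by \autoref{mapping} it suffices to prove this is an isomorphism for all $p,q$. Since $\Omega^p(T)$ is a finite free $T$-module, $\hat T$ is flat over $T$, and local cohomology commutes with finite direct sums and with flat base change (compute it from the \v{C}ech complex on generators of $I$), the map on $E_1$ is a finite direct sum of copies of the natural map $H^q_I(T)\to\hat T\otimes_T H^q_I(T)$, $m\mapsto 1\otimes m$. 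The one substantive point — and the only place the hypothesis $J\subseteq I$ enters — is that $H^q_I(T)$ is a $J$-torsion $T$-module, hence unchanged by $\hat T\otimes_T(-)$: it is $I$-torsion and $J\subseteq I$, so each element is annihilated by some $J^m$, and for any $T/J^m$-module $N$ one has $\hat T\otimes_T N\cong(\hat T/J^m\hat T)\otimes_{T/J^m}N\cong(T/J^m)\otimes_{T/J^m}N=N$ compatibly with $n\mapsto 1\otimes n$, using $\hat T/J^m\hat T\cong T/J^m$. Writing $H^q_I(T)=\varinjlim_m\{x\in H^q_I(T):J^m x=0\}$ and using that tensoring commutes with direct limits shows that $H^q_I(T)\to\hat T\otimes_T H^q_I(T)$ is an isomorphism, completing the argument. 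I do not anticipate a real obstacle here: the only care needed is in identifying the differential on $\hat T\otimes_T\Omega^\bullet(T)$ correctly, and in the bookkeeping that reduces the $E_1$-isomorphism to the $J$-torsion of local cohomology.
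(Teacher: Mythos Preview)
Your proposal is correct and follows essentially the same route as the paper: both identify the $E_1$ terms as $H^q_I(T)\otimes_T\Omega^p(T)$ and $H^q_I(\hat T)\otimes_T\Omega^p(T)$, reduce to showing that the natural map $H^q_I(T)\to H^q_I(\hat T)\cong\hat T\otimes_T H^q_I(T)$ is an isomorphism, and then invoke \autoref{mapping}. You supply more detail than the paper does---in particular the explicit $J$-torsion argument (using $J\subseteq I$ and $\hat T/J^m\hat T\cong T/J^m$) where the paper simply says ``since $\hat T$ is $J$-adically complete,'' and the verification that $\hat T\otimes_T\Omega^\bullet(T)$ really carries a de Rham differential, which the paper defers to the remark immediately following the proof---but the underlying argument is the same.
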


\begin{proof}
Write $E_\bullet^{\bullet,\bullet}$ for the spectral sequence corresponding to local hypercohomology of the de
 Rham complex of $T$, and ${\bf E}_\bullet^{\bullet,\bullet}$ for the spectral sequence corresponding to local hypercohomology of the complex $\hat{T}\otimes_T \Omega^{\bullet}(T)$. The $E_1$ pages of these spectral
sequences consist of the modules $E_1^{p,q} = H_I^{q}(T \otimes_T \Omega^p(T)) = H^q_I(T) \otimes_T \Omega^p(T)$ and ${\bf E}_1^{p,q}= H^q_I(\hat{T} \otimes_T \Omega^p(T)) = H^q_I(\hat{T}) \otimes_T  \Omega^p(T)$.  The map $E_1^{p,q} \rightarrow {\bf E}_1^{p,q}$, is induced by the map $H_I^q(T) \rightarrow H^q_I(\hat{T})$, which in turn is induced by the natural map $T \rightarrow \hat{T}$.  This map $H_I^q(T) \rightarrow H^q_I(\hat{T})=H^q_I(T)\otimes_T\hat{T}$ is an isomorphism since $H^q_I(T)$ is $J$-torsion and $\hat{T}$ is $J$-adically complete.  Thus $E_1^{p,q} \rightarrow {\bf E}_1^{p,q}$ is an 
  isomorphism, so from \autoref{mapping} the map $E_\bullet^{\bullet,\bullet} \rightarrow {\bf E}_\bullet^{\bullet,\bullet}$ is an isomorphism, starting on the $E_1$ page.
\end{proof}

Since $\hat{T}$ is naturally a $D(T,\kk)$-module, the complex $\hat{T}\otimes_T\Omega^{\bullet}(T)$ is the de Rham complex of $\hat{T}$.

\begin{lemma}
There exists an morphism of spectral sequences $E_\bullet^{\bullet, \bullet} \rightarrow \tilde{\bf E}_\bullet^{\bullet, \bullet}$ of bidegree shift $(n-s,0)$, where $E_\bullet^{\bullet, \bullet}$ is the local hypercohomology spectral sequence with support in $I$ associated to de Rham complex of $\hat{T}$, and $\tilde {\bf E}_\bullet^{\bullet, \bullet}$ is the local hypercohomology spectral sequence with support in $I$ associated to the de Rham complex of $H^{n-s}_J(\hat{T})$.
\end{lemma}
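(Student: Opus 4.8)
The plan is to reduce everything to the single geometric fact that $X=V(J)$, being smooth of codimension $c:=n-s$ in $\Spec T$, is locally a complete intersection, so that $H^i_J(T)=0$ for $i\neq c$ and hence (by flatness of the $J$-adic completion) $H^i_J(\hat T)=H^i_J(T)\otimes_T\hat T=0$ for $i\neq c$ as well. Concretely, if $f_1,\dots,f_m$ generate $J$, the extended \v{C}ech complex $\check C^\bullet=\check C^\bullet(f_1,\dots,f_m;\hat T)$ is a complex of holonomic $\mathcal D(\hat T,k)$-modules with $\check C^0=\hat T$ and $\check C^b$ ($b\geq 1$) a finite sum of localizations of $\hat T$, and it is exact except in \v{C}ech degree $c$, where its cohomology is $H^c_J(\hat T)$. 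I would record two auxiliary points. First, because $V(I)\subseteq V(J)$, every localization occurring in $\check C^{\geq 1}$ is $R\Gamma_I$-acyclic (the expansion of $I$ to it is the unit ideal), and so is the cocycle module $Z^c=\ker(\check C^c\to\check C^{c+1})$, since the tail $\check C^c\to\cdots\to\check C^m$ is a complex of such localizations and is quasi-isomorphic to $Z^c[-c]$. Second, the de Rham functor $\Omega^\bullet(-)$ is exact, since $\Omega^p(M)=M\otimes_T\Omega^p(T)$ with $\Omega^p(T)$ free over $T$; applied termwise it turns $\check C^\bullet$ into a double complex whose rows, read in the \v{C}ech direction, are exact outside \v{C}ech degree $c$, with cohomology $\Omega^p(H^c_J(\hat T))$ there.

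Next I would realize the derived-category relation $R\Gamma_J(\hat T)\simeq H^c_J(\hat T)[-c]$ at the level of de Rham complexes and feed it into \autoref{canusesequence} and \autoref{cptoss}, together with the degree-shifted versions of \autoref{cptoss} and \autoref{dbtoss} remarked on after Definition~2.9. Applying $\Omega^\bullet(-)$ termwise to the good truncation $\tau_{\leq c}\check C^\bullet=[\hat T\to\check C^1\to\cdots\to\check C^{c-1}\to Z^c]$ and forming total complexes, the two quasi-isomorphisms $\tau_{\leq c}\check C^\bullet\hookrightarrow\check C^\bullet$ and $\tau_{\leq c}\check C^\bullet\twoheadrightarrow H^c_J(\hat T)[-c]$ yield chain maps linking $\Omega^\bullet(\hat T)$ (the \v{C}ech-degree-zero row), the total complex $\mathrm{Tot}(\Omega^\bullet(\tau_{\leq c}\check C^\bullet))$, and $\Omega^\bullet(H^c_J(\hat T))[-c]$. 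Here the $R\Gamma_I$-acyclicity above is what makes the argument run: on the $E_1$-page of the $\Gamma_I$-hyperderived spectral sequence of $\mathrm{Tot}(\Omega^\bullet(\tau_{\leq c}\check C^\bullet))$ only the \v{C}ech-degree-zero summands survive $H^q_I(-)$, so the ``forget supports'' chain map onto $\Omega^\bullet(\hat T)$ is an isomorphism already on $E_1$ and hence, by \autoref{mapping}, an isomorphism of spectral sequences. Composing its inverse with the morphism induced by the other chain map, and observing that the spectral sequence of the shifted complex $\Omega^\bullet(H^c_J(\hat T))[-c]$ is $\tilde{\mathbf E}_\bullet^{\bullet,\bullet}$ re-indexed by $(c,0)$, produces a morphism $E_\bullet^{\bullet,\bullet}\to\tilde{\mathbf E}_\bullet^{\bullet,\bullet}$ of bidegree shift $(n-s,0)$.

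The step I expect to be the main obstacle is ensuring that this morphism is the genuinely nontrivial comparison map, i.e. that the chain-level construction does not collapse to something zero on the $E_1$-page. The hyperderived-functor spectral sequence of a complex is not an invariant of its quasi-isomorphism class, so one may not simply invoke that $\Omega^\bullet(\hat T)$ and $\Omega^\bullet(H^c_J(\hat T))[-c]$ become quasi-isomorphic after $R\Gamma_I$; the comparison must be carried out with chain maps whose interaction with $\Gamma_I$ is controlled term by term, which is precisely what the $R\Gamma_I$-acyclicity of the localized \v{C}ech entries and of $Z^c$ provides, and it is likely that the correct intermediate object differs in detail from the naive $\mathrm{Tot}(\Omega^\bullet(\tau_{\leq c}\check C^\bullet))$. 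The remaining bookkeeping — that $\Omega^\bullet(-)$ is compatible with good truncation and with passage to total complexes (using exactness of $\Omega^\bullet$ and freeness of $\Omega^p(T)$), and that the various degree and bidegree shifts compose to exactly $(n-s,0)$ in the homological indexing of these spectral sequences rather than to some transposed or sign-reversed value — I would treat as routine once the comparison map itself is pinned down.
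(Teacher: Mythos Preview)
Your zigzag collapses to the zero morphism of spectral sequences, for exactly the reason you feared. The chain map $\pi_c:\mathrm{Tot}\bigl(\Omega^\bullet(\tau_{\le c}\check C^\bullet)\bigr)\to\Omega^\bullet(H^c_J(\hat T))[-c]$ is, in each total degree $p$, the composite $\mathrm{Tot}^p\twoheadrightarrow\Omega^{p-c}(Z^c)\to\Omega^{p-c}(H^c_J(\hat T))$; but the map a chain map induces on the $E_1$-page of the hyperderived spectral sequence is simply $R^q\Gamma_I$ applied termwise, and you have already shown $H^q_I(Z^c)=0$ for all $q$. So $E_1(\pi_c)=0$. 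Meanwhile your other map $\pi_0:\mathrm{Tot}\to\Omega^\bullet(\hat T)$ is an isomorphism on $E_1$ precisely because all the $H^q_I$-content of $\mathrm{Tot}^p$ sits in the \v{C}ech-degree-zero summand, \emph{which $\pi_c$ kills}. Composing $E_1(\pi_c)$ with $E_1(\pi_0)^{-1}$ therefore gives the zero morphism $E\to\tilde{\mathbf E}$, and since each $E_{r+1}$ is a subquotient of $E_r$ this propagates to every page. Changing the intermediate object does not help as long as you insist on a zigzag of chain maps between de Rham complexes: any such map into $\Omega^\bullet(H^c_J(\hat T))[-c]$ must factor on $E_1$ through the image of a summand on which $H^q_I$ already vanishes.

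The paper sidesteps this by never constructing a chain map between de Rham complexes at all. It takes the minimal injective resolution $\mathcal J^\bullet$ of $\hat T$ (the Cousin complex, which is a complex of $\mathcal D$-modules) and forms the double complex $\mathcal J^{p,q}=\mathcal J^q\otimes_{\hat T}\Omega^p(\hat T)$. Because $\mathrm{ht}\,J=n-s$ and $X$ is locally a complete intersection, $\Gamma_J(\mathcal J^q)=0$ for $q<n-s$ and the shifted complex $\Gamma_J(\mathcal J^{\bullet+(n-s)})$ is an injective resolution of $H^{n-s}_J(\hat T)$. The key step is the \emph{equality of functors} $\Gamma_I=\Gamma_I\circ\Gamma_J$ (since $J\subset I$): applying $\Gamma_I$ to $\mathcal J^{\bullet,\bullet}$ and to the shifted $\Gamma_J(\mathcal J^{\bullet,\bullet})$ produces literally the same double complex. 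The two column-filtered spectral sequences are therefore identified on the nose up to a row shift by $n-s$; no inversion of a quasi-isomorphism is needed, and the resulting morphism is manifestly an isomorphism rather than zero.
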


\begin{proof} 
Let $\mathcal J^{\bullet}$ be a minimal injective resolution of $\hat{T}$.  Since $\hat{T}$ is Gorenstein, this is the Cousin complex $C^\bullet$, which is defined by $C^{-2}(\hat{T})=0$, $C^{-1}(\hat{T})=\hat{T}$, and when $i\geq 0$, $C^i(\hat{T})=\oplus(\coker d^{i-2})_{\p}$, where the direct sum extends over all $\p$ of height $i$, and the differentials are the canonical localization maps.  These modules are $D$-modules, and the maps are $D$-linear maps (see Lemma 2.25 in \cite{switala}).

Consider the double complex $\mathcal J^{\bullet, \bullet}$ where $\mathcal J^{p,q}=\mathcal J^q\otimes_{\hat{T}} \Omega^p(\hat{T})$.  The columns of this double complex are the minimal injective resolutions of $\Omega^t$ and the rows are the de Rham complexes of $\mathcal J^t$. Now in the complex $\Gamma_J(\mathcal J)^{\bullet, \bullet}$ given by applying the functor $\Gamma_J(-)$ to each entry, the $p$th row, where $p=n-s$, is the first nonzero row, since ht$(J)$ is $n-s$.  The vertical cohomology in degree $n-s$ of this complex is $H^{n-s}_J(\Omega^{\bullet}(\hat{T}))$ and the vertical cohomology in degrees $>n-s$ is zero, since $J$ is locally a complete intersection of height $n-s$. So by performing a row shift of $s-n$ on the double complex $\Gamma_J(\mathcal J)^{\bullet, \bullet}$, we obtain the double complex for the de Rham complex of $H^{n-s}_J(\hat{T})$, i.e. the columns are injective resolutions of $H^{n-s}_J(\hat{T})\otimes_T\Omega^t$ and the rows are the de Rham complexes of $\mathcal J^t$.  Write $\mathcal J_0^{\bullet, \bullet}$ for that complex.

Now, since $\Gamma_I(\Gamma_J(\mathcal J^{p,q}))=\Gamma_I(\mathcal J^{p,q})$, by applying $\Gamma_I$ to each complex we may identify $\Gamma_I(\mathcal J^{\bullet, \bullet})$ with $\Gamma_I(\mathcal J_0^{\bullet,\bullet})$, by a row shift of $n-s$ on $\Gamma_I(\mathcal J^{\bullet, \bullet})$.  This isomorphism induces a morphism on spectral sequences by \autoref{cptoss}
\end{proof}

We now construct a morphism between the de Rham complex of $X$ and the de Rham complex of $H_J^{n-s}(\hat{T})$. The morphism will be shifted in degrees by $n-s$, i.e.~the degree $i$ piece of the de Rham complex of $X$ will map to the degree $i+n-s$ piece of the de Rham complex of  $H_J^{n-s}(\hat{T})$. This will produce the other $n-s$ half of the bidegree shift in the isomorphism of spectral sequences. For this we will need a few more lemmas.

Recall that a $\kk$-algebra $S$ is {\it formally smooth} over $\kk$ if given any $\kk$-algebra $A$ and ideal $\mathfrak A\subset A$ of square zero, every $\kk$-algebra map $S\to A/\mathfrak A$ lifts to a $\kk$-algebra map $S\to A$ (See Definition 2.1 of chapter 17 of \cite{cring}). The basic example of a formally smooth $\kk$-algebra is the polynomial ring $\kk[x_1,\dots,x_n]$ (Example 2.2 in chapter 17 of \cite{cring}). We quote the following result of Grothendieck.

\begin{theorem}
[\cite{cring} Ch.~17, 2.4] Suppose $T$ is a formally smooth $\kk$-algebra and let $J\subset T$ be an ideal. Then $R=T/J$ is a formally smooth $\kk$-algebra if and only if the conormal sequence $$J/J^2\to \Omega_{T/\kk} \otimes_TR\to\Omega_{R/\kk}\to 0$$ is split exact.
\end{theorem}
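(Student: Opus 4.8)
The statement is the affine infinitesimal criterion for smoothness; it is classical and due to Grothendieck, but here is how I would prove it using only the lifting property in the definition of formal smoothness, together with the conormal sequence (Proposition~\ref{conormal}) and the tautological identifications $\operatorname{Der}_k(T,M)\cong\Hom_T(\Omega_{T/k},M)\cong\Hom_R(\Omega_{T/k}\otimes_TR,M)$, valid for every $R$-module $M$ (with $R=T/J$). Throughout, the conormal map is the $R$-linear map $\delta\colon J/J^2\to\Omega_{T/k}\otimes_TR$, $j\mapsto dj\otimes 1$, obtained from Proposition~\ref{conormal} applied to the surjection $T\to R$ over $k$; since that sequence is already right exact, ``split exact'' means exactly that $\delta$ is injective and admits an $R$-linear retraction, so it suffices to produce, respectively to use, such a retraction.

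For the forward direction (formal smoothness of $R$ implies splitness) I would exploit the ``universal'' square-zero extension $p\colon T/J^2\to T/J=R$, whose kernel $J/J^2$ is square zero (hence an $R$-module). Formal smoothness of $R$, applied to $p$ and the identity map $R\to R$, yields a $k$-algebra section $\sigma\colon R\to T/J^2$. Then $D:=\operatorname{id}-\sigma\circ p\colon T/J^2\to T/J^2$ has image in $\ker p=J/J^2$, and a one-line computation shows it is a $k$-linear derivation into $J/J^2$: the failure of the Leibniz rule for $\operatorname{id}-\sigma p$ is the product $D(a)D(b)$, which vanishes because $(J/J^2)^2=0$ in $T/J^2$. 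Composing with $T\to T/J^2$ gives a $k$-derivation $T\to J/J^2$, hence an $R$-linear map $r\colon\Omega_{T/k}\otimes_TR\to J/J^2$ with $r(dt\otimes 1)=D(t\bmod J^2)$. Since $D$ is the identity on $J/J^2\subseteq T/J^2$, we get $r\circ\delta=\operatorname{id}$, so $\delta$ is split injective and the conormal sequence is split exact.

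For the converse I would verify the lifting property for $R$ directly, using a fixed $R$-linear retraction $\rho\colon\Omega_{T/k}\otimes_TR\to J/J^2$ of $\delta$. Given a square-zero extension $0\to\mathfrak A\to A\xrightarrow{\pi}A/\mathfrak A\to 0$ and a $k$-algebra map $u\colon R\to A/\mathfrak A$, formal smoothness of $T$ first lifts the composite $T\to R\xrightarrow{u}A/\mathfrak A$ to a $k$-algebra map $v\colon T\to A$. Since $v(J)\subseteq\mathfrak A$ and $\mathfrak A^2=0$, the map $v|_J$ kills $J^2$ and descends to an $R$-linear map $\phi\colon J/J^2\to\mathfrak A$, where $\mathfrak A$ is given its $R$-module structure through $u$. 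Let $\partial\colon T\to\mathfrak A$ be the $k$-derivation corresponding to the $R$-linear map $\phi\circ\rho\colon\Omega_{T/k}\otimes_TR\to\mathfrak A$. Then $v-\partial\colon T\to A$ is again a $k$-algebra homomorphism (the correction terms vanish because $\mathfrak A^2=0$), and for $j\in J$ one computes $(v-\partial)(j)=\phi(j\bmod J^2)-\phi(\rho(\delta(j\bmod J^2)))=0$ since $\rho\circ\delta=\operatorname{id}$. Hence $v-\partial$ factors through a $k$-algebra map $w\colon R\to A$, and $\pi\circ w=u$ because $\partial$ has image in $\ker\pi=\mathfrak A$ and $w$ agrees with $v$ modulo $\mathfrak A$. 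This is the desired lift, so $R$ is formally smooth.

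The only delicate parts are bookkeeping rather than substance: one must confirm that $\operatorname{id}-\sigma p$ is a derivation and that $v-\partial$ is a ring homomorphism, which is precisely where the square-zero hypotheses enter, and one must keep straight that $J/J^2$ and $\mathfrak A$ are $R$-modules, so that derivations into them are classified by $R$-linear maps out of $\Omega_{T/k}\otimes_TR$ and the map $\delta$ really is the one from Proposition~\ref{conormal}. Beyond these routine verifications I expect no real obstacle; the formal-smoothness hypothesis on $T$ (which for us is just $T=k[x_1,\dots,x_n]$) is used only through the single lifting step producing $v$, and the argument above is the standard one.
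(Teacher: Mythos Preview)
Your proof is correct and is the standard argument for this classical result. However, the paper does not actually prove this theorem at all: it is stated as a quotation from the cited reference \cite{cring} (and attributed to Grothendieck), introduced by the sentence ``We quote the following result of Grothendieck,'' and then immediately used as a black box in the proof of the next lemma. So there is no ``paper's own proof'' to compare against; you have supplied a complete proof where the paper chose to cite one.

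For what it is worth, your argument is exactly the expected one: the forward direction uses the universal square-zero thickening $T/J^2\to R$ to manufacture the retraction, and the converse corrects an arbitrary lift $v\colon T\to A$ (obtained from formal smoothness of $T$) by the derivation associated to $\phi\circ\rho$. The bookkeeping points you flag (that $\operatorname{id}-\sigma p$ is a derivation because $(J/J^2)^2=0$, that $v-\partial$ is multiplicative because $\mathfrak A^2=0$, and that $\phi$ is $R$-linear because $\mathfrak A$ is an $A/\mathfrak A$-module) are all handled correctly.
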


We use this result in the proof of the following lemma.

\begin{lemma}\label{khom}
There exists a $\kk$-algebra homomorphism $\phi:R=\hat T/J\to \hat T$ such that the composition $\psi\circ\phi:R\to R$ is the identity, where the homomorphism $\psi:\hat T\to \hat T/J\cong R$ is the natural surjection.
\end{lemma}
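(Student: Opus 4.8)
The plan is to deduce the existence of $\phi$ from the formal smoothness of $R$ over $k$, applied along the $J\hat T$-adic tower. First note that $\hat T/J^m\hat T\cong T/J^m$ for every $m\geq 1$, so in particular $R=\hat T/J\hat T\cong T/J$, which is the coordinate ring of the smooth affine variety $X$. Since $X$ is smooth over $k$ of dimension $s$, the $R$-module $\Omega_{R/k}$ is projective (locally free of rank $s$) and the conormal sequence $J/J^2\to\Omega_{T/k}\otimes_T R\to\Omega_{R/k}\to 0$ attached to $J\subset T$ is split exact; this is precisely the algebraic content of the smoothness of the closed immersion $X\hookrightarrow\mathbb A^n$. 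By the theorem of Grothendieck quoted above (\cite{cring}, Ch.~17, 2.4), applied with the polynomial ring $T$ in the role of the formally smooth $k$-algebra, it follows that $R$ is formally smooth over $k$.

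Next I would construct $\phi$ as an inverse limit of partial lifts. The ring $\hat T$ is $J\hat T$-adically complete and separated: by construction $\hat T=\varprojlim_m T/J^m=\varprojlim_m \hat T/J^m\hat T$. Put $\phi_1=\mathrm{id}_R\colon R\to \hat T/J\hat T=R$, and suppose inductively that a $k$-algebra homomorphism $\phi_m\colon R\to \hat T/J^m\hat T$ has been constructed whose reduction modulo $J\hat T$ is $\mathrm{id}_R$. For $m\geq 1$ the ideal $J^m\hat T/J^{m+1}\hat T$ of $\hat T/J^{m+1}\hat T$ has square zero, so formal smoothness of $R$ over $k$ provides a $k$-algebra homomorphism $\phi_{m+1}\colon R\to \hat T/J^{m+1}\hat T$ lifting $\phi_m$ along the natural surjection $\hat T/J^{m+1}\hat T\to\hat T/J^m\hat T$; its reduction modulo $J\hat T$ is again $\mathrm{id}_R$. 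The $\phi_m$ are then compatible with the transition maps of the tower, hence they define a $k$-algebra homomorphism $\phi=\varprojlim_m\phi_m\colon R\to\varprojlim_m\hat T/J^m\hat T=\hat T$. Composing $\phi$ with the projection $\psi\colon\hat T\to\hat T/J\hat T=R$ recovers $\phi_1=\mathrm{id}_R$, so $\psi\circ\phi=\mathrm{id}_R$.

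The one step that requires care is the claim that smoothness of $X$ yields split exactness of the conormal sequence of $J\subset T$: one should argue that $\Omega_{R/k}$ is locally free of rank $s=\dim X$ by the Jacobian criterion (valid in characteristic $0$), that the immersion $X\hookrightarrow\mathbb A^n$ is regular so that $0\to J/J^2\to\Omega_{T/k}\otimes_T R\to\Omega_{R/k}\to 0$ is exact with $J/J^2$ locally free, and that a surjection of $R$-modules onto a projective module admits a section. Once this is in place the remainder is the standard successive-approximation argument for lifting a homomorphism along a complete adic filtration; the only thing to keep in mind there is that $\bigcap_m J^m\hat T=0$, so that $\varprojlim_m\phi_m$ genuinely lands in $\hat T$ rather than in some larger completion. (The passage to $\hat T$ is essential here: a polynomial ring surjection $T\to T/J$ need not split, as e.g.\ $k[x,y]\to k[x,1/x]$ shows, whereas $x$ becomes a unit in the $(xy-1)$-adic completion.)
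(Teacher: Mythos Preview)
Your proof is correct and follows essentially the same approach as the paper: establish that $R$ is formally smooth over $k$ via Grothendieck's criterion (split exactness of the conormal sequence of $J\subset T$), then use formal smoothness to produce the section $R\to\hat T$. The only difference is cosmetic: where the paper cites Matsumura for the lifting step, you spell out the standard successive-approximation argument along the $J$-adic tower, which is exactly what that reference contains.
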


\emph{Proof.} Since in our case the polynomial ring $T=\kk[x_1,\dots,x_n]$ is indeed formally smooth over $\kk$ and the conormal sequence is indeed split exact (since $R=T/J$ is locally a complete intersection, see Ex 17.12 in \cite{eisenbud}), we conclude that $R$ is formally smooth over $\kk$. Thus there exists a lifting $R\to \hat T$ (Remark on pages 199-200 in \cite{matsumura}). \qed

From now on we regard $\hat T$ as an $R$-algebra via $\phi$.  We will denote the module of K\"ahler differentials of $T$ as an $R$-algebra as $\Omega_{T/R}$.  Next we quote the following result.

\begin{lemma}
[Lemma 10.130.11 in \cite{cring}] Let $R\to S$ be a ring map. Let $I\subset S$ be an ideal. Let $n\geq 1$ be an integer. Set $S'=S/I^{n+1}$. The map $\Omega_{S/R}\to \Omega_{S'/R}$ induces an isomorphism $$\Omega_{S/R}\otimes_SS/I^n\to\Omega_{S'/R}\otimes_{S'}S/I^n.$$
\end{lemma}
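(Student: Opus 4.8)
The plan is to derive the statement entirely from the conormal (second fundamental) exact sequence of \autoref{conormal}, applied to the surjection $\pi\colon S\to S'=S/I^{n+1}$ whose kernel is $I^{n+1}$. That sequence reads
$$I^{n+1}/I^{2n+2}\xrightarrow{\ \overline{d}\ }\Omega_{S/R}\otimes_S S'\xrightarrow{\ \beta\ }\Omega_{S'/R}\to 0,$$
where $\overline d$ is induced by the universal $R$-linear derivation $d$ of $S$. Since $\pi$ is surjective, $\Omega_{S/R}\otimes_S S'\cong\Omega_{S/R}/I^{n+1}\Omega_{S/R}$, and the sequence exhibits $\Omega_{S'/R}$ as the quotient of $\Omega_{S/R}/I^{n+1}\Omega_{S/R}$ by the submodule $N:=\im\overline{d}$.

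First I would estimate $N$ using the Leibniz rule. If $g_1,\dots,g_{n+1}\in I$, then $d(g_1\cdots g_{n+1})=\sum_{j=1}^{n+1}\bigl(\prod_{i\neq j}g_i\bigr)\,dg_j$, and each coefficient $\prod_{i\neq j}g_i$ is a product of $n$ elements of $I$, hence lies in $I^n$. Since $I^{n+1}$ is generated by products $g_1\cdots g_{n+1}$ with $g_i\in I$, this shows $d(I^{n+1})\subseteq I^n\Omega_{S/R}$, so that $N\subseteq I^n\Omega_{S/R}/I^{n+1}\Omega_{S/R}$ inside $\Omega_{S/R}/I^{n+1}\Omega_{S/R}$.

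Next I would base change the conormal sequence along the surjection $S'\to S'/I^nS'=S/I^n$. Because $\Omega_{S/R}\otimes_S S'\cong\Omega_{S/R}/I^{n+1}\Omega_{S/R}$, tensoring it over $S'$ with $S/I^n$ gives $\Omega_{S/R}/I^n\Omega_{S/R}=\Omega_{S/R}\otimes_S S/I^n$, and right-exactness of $-\otimes_{S'}S/I^n$ yields an exact sequence
$$N\otimes_{S'}S/I^n\to\Omega_{S/R}\otimes_S S/I^n\to\Omega_{S'/R}\otimes_{S'}S/I^n\to 0.$$
Thus the right-hand map is surjective, and its kernel is the image of $N$ in $\Omega_{S/R}\otimes_S S/I^n=\Omega_{S/R}/I^n\Omega_{S/R}$. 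But the reduction map $\Omega_{S/R}/I^{n+1}\Omega_{S/R}\to\Omega_{S/R}/I^n\Omega_{S/R}$ annihilates $I^n\Omega_{S/R}/I^{n+1}\Omega_{S/R}$, which by the previous step contains $N$; hence $N$ maps to $0$, the kernel is trivial, and the right-hand map is an isomorphism. Finally I would check that this map is indeed the one induced by $\Omega_{S/R}\to\Omega_{S'/R}$, by factoring that canonical map as $\Omega_{S/R}\to\Omega_{S/R}\otimes_S S'\xrightarrow{\beta}\Omega_{S'/R}$, and that $\Omega_{S'/R}\otimes_S S/I^n=\Omega_{S'/R}\otimes_{S'}S/I^n$ since $\Omega_{S'/R}$ is already an $S'$-module, so source and target are correctly identified.

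I expect the only delicate point to be the bookkeeping in the third paragraph: observing that the containment $N\subseteq I^n\Omega_{S/R}/I^{n+1}\Omega_{S/R}$ is exactly strong enough to kill $N$ after tensoring with $S/I^n$ (with a smaller power of $I$ this would fail), and correctly matching the source $\Omega_{S/R}\otimes_S S/I^n$ with the cokernel description extracted from the conormal sequence. Everything else is a formal consequence of \autoref{conormal} and the Leibniz identity.
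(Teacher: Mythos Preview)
The paper does not prove this lemma; it simply quotes it from \cite{cring} and applies it in the subsequent lemma. Your argument is correct and is precisely the standard proof one finds in that reference: apply the conormal sequence of \autoref{conormal} to the surjection $S\to S/I^{n+1}$, use the Leibniz rule to bound $d(I^{n+1})\subseteq I^n\Omega_{S/R}$, and then reduce modulo $I^n$.
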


We use this result in the proof of the following lemma.

\begin{lemma}\label{tmap}
The $R$-algebra structure on $\hat T$ induces a natural injective $\hat T$-module map $$\hat T\otimes_R\Omega_{R/\kk}\to \hat T\otimes_T\Omega_{T/\kk}$$ and a direct sum decomposition $$\hat T\otimes_T\Omega_{T/\kk}=(\hat T\otimes_R\Omega_{R/\kk})\oplus P$$ where $P$ is a projective $\hat T$-module such that $P/JP\cong J/J^2$.
\end{lemma}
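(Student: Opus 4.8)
The plan is to construct $\iota$ directly from the section $\phi$, to recognize its reduction modulo $J$ as a splitting of the (already known to be split) conormal sequence of $T\twoheadrightarrow R$, and then to promote that splitting to $\hat T$ by a $J$-adic completeness argument.

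First I would build $\iota$. Write $d$ for the de Rham differential of $\hat T$. The composite $d\circ\phi\colon R\to\Omega^1(\hat T)=\hat T\otimes_T\Omega_{T/k}$ is a $k$-linear derivation of $R$ into the $R$-module $\Omega^1(\hat T)$, where $R$ acts through $\phi$; this uses only that $d$ is a derivation of $\hat T$ and that $\phi$ is a ring homomorphism. By the universal property of $\Omega_{R/k}$ it factors uniquely through an $R$-linear map $\Omega_{R/k}\to\Omega^1(\hat T)$, $dr\mapsto d(\phi(r))$, and I take $\iota$ to be its $\hat T$-linear extension $\hat T\otimes_R\Omega_{R/k}\to\hat T\otimes_T\Omega_{T/k}$. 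Since $X$ is a smooth affine variety, $\Omega_{R/k}$ is a finitely generated projective $R$-module, so $P_1:=\hat T\otimes_R\Omega_{R/k}$ is a finitely generated projective $\hat T$-module, while $\hat T\otimes_T\Omega_{T/k}$ is free of rank $n$ on $1\otimes dx_1,\dots,1\otimes dx_n$.

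Next I would reduce modulo $J$. Applying $-\otimes_{\hat T}(\hat T/J)$ turns $\iota$ into the $R$-linear map $\bar\iota\colon\Omega_{R/k}\to R\otimes_T\Omega_{T/k}$, $dr\mapsto\overline{d(\phi(r))}$. I claim $\bar\iota$ is a section of the conormal map $\pi\colon R\otimes_T\Omega_{T/k}\to\Omega_{R/k}$. The key identity is that reduction modulo $J$ followed by $\pi$ carries $\overline{d(f)}$ to $d_R(\psi(f))$ for every $f\in\hat T$, where $d_R$ is the de Rham differential of $R$: this is a direct check for $f$ in the image of $T$ (using that the natural map $T\to\hat T$ is compatible with the $\partial_i$), and it extends to all of $\hat T$ by $J$-adic continuity, since the image of $T$ is dense in $\hat T$ and $\Omega_{R/k}$ is $J$-adically discrete as a $\hat T$-module (because $\psi(J)=0$). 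Applying the identity with $f=\phi(r)$ and using $\psi\circ\phi=\mathrm{id}_R$ gives $\pi\circ\bar\iota=\mathrm{id}_{\Omega_{R/k}}$. On the other hand, the conormal sequence $$0\to J/J^2\to R\otimes_T\Omega_{T/k}\xrightarrow{\pi}\Omega_{R/k}\to 0$$ is split exact (as already used in the proof of the preceding lemma: $R$ is locally a complete intersection, so this follows from Grothendieck's theorem and the formal smoothness of $R$). Combined with the splitting $\bar\iota$, this shows $\bar\iota$ is injective with $\coker(\bar\iota)\cong J/J^2$.

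Finally I would lift to $\hat T$. Fix an $R$-linear retraction $\bar\rho$ of $\bar\iota$. Since $\hat T\otimes_T\Omega_{T/k}$ is free (hence projective) and the reduction $P_1\twoheadrightarrow P_1/JP_1=\Omega_{R/k}$ is surjective, the map $\hat T\otimes_T\Omega_{T/k}\twoheadrightarrow R\otimes_T\Omega_{T/k}\xrightarrow{\bar\rho}\Omega_{R/k}$ lifts to a $\hat T$-linear $\rho\colon\hat T\otimes_T\Omega_{T/k}\to P_1$, and by construction $\rho\circ\iota$ reduces modulo $J$ to $\mathrm{id}_{P_1/JP_1}$. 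Hence $N:=\rho\circ\iota-\mathrm{id}_{P_1}$ maps $P_1$ into $JP_1$, so $N^m(P_1)\subseteq J^mP_1$; because $\hat T$ is $J$-adically complete and $P_1$ is finitely generated, $P_1$ is $J$-adically complete and separated, and $\sum_{m\geq 0}(-N)^m$ converges to an inverse of $\rho\circ\iota$. Thus $\iota$ is a split monomorphism, so $P:=\coker(\iota)$ — which one may also identify with the module of Kähler differentials $\Omega_{\hat T/R}$ of $\hat T$ over $R$ via $\phi$ — is a direct summand of the free module $\hat T\otimes_T\Omega_{T/k}$ and is therefore (finitely generated) projective; tensoring the split sequence $0\to P_1\xrightarrow{\iota}\hat T\otimes_T\Omega_{T/k}\to P\to 0$ with $\hat T/J$ gives $P/JP\cong\coker(\bar\iota)\cong J/J^2$. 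I expect the main obstacle to be precisely this last passage from $R$ back up to $\hat T$: it is here that the $J$-adic completeness of $\hat T$ is indispensable, and one must also take care to prove the mod-$J$ identity $\pi(\overline{df})=d_R(\psi(f))$ via continuity and the functoriality of the de Rham complex rather than via a coordinate computation, since $\hat T$ need not be a power series ring over $R$.
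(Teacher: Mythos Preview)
Your argument is correct and takes a genuinely different route from the paper's. The paper works at each finite level $T/J^n$: it invokes the Stacks lemma on $\Omega_{S/R}\otimes_S S/I^n$ to rewrite the first fundamental sequence for $k\to R\to T/J^{n+1}$, then proves the right-hand term is locally free of rank $n-s$ by a local complete-intersection computation, deduces injectivity and splitting at each level by a rank count, and finally passes to the inverse limit. You instead build $\iota$ directly from the universal property, reduce modulo $J$ to identify $\bar\iota$ as a section of the already-known-split conormal sequence, and then lift the splitting back to $\hat T$ by a Neumann-series argument exploiting $J$-adic completeness of the finitely generated $\hat T$-module $P_1$. Your approach is cleaner in that it avoids the auxiliary Stacks lemma and the local rank computation; the paper's approach, on the other hand, yields the explicit description $P=\hat\Omega_{\hat T/R}$ as the $J$-adic completion of the relative differentials, which it uses in the sequel.

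One small correction: your parenthetical identification of $P$ with $\Omega_{\hat T/R}$ is not quite right---as the paper observes, $\Omega_{\hat T/R}$ is not finitely generated over $\hat T$, and it is only its $J$-adic completion that agrees with your $P$. This does not affect your proof, since you never use that identification; your direct-summand argument already shows $P$ is finitely generated projective.
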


\emph{Proof.} Let $R\subset T/J^{n+1}$ be the image of $R=\phi(R)\subset \hat T$ under the natural surjection $\hat T\to T/J^{n+1}$. The ring maps $\kk\to R\to T/J^{n+1}$ induce the standard exact sequence of $T/J^{n+1}$-modules $$T/J^{n+1}\otimes_R\Omega_{R/\kk}\to \Omega_{(T/J^{n+1})/\kk}\to \Omega_{(T/J^{n+1})/R}\to 0.$$
Tensoring this exact sequence with $T/J^n\otimes_{T/J^{n+1}}$ and considering that the tensor product is right exact we get the exact sequence $$T/J^{n}\otimes_R\Omega_{R/\kk}\to T/J^n\otimes_{T/J^{n+1}}\Omega_{(T/J^{n+1})/\kk}\to T/J^n\otimes_{T/J^{n+1}}\Omega_{(T/J^{n+1})/R}\to 0.$$ According to the preceding lemma, the term in the middle is isomorphic to $T/J^n\otimes_T\Omega_{T/\kk}$ while the term on the right is isomorphic to $T/J^n\otimes_T\Omega_{\hat{T}/R}$ (considering that $\hat T/J^{n+1}\cong T/J^{n+1}).$ Hence the exact sequence becomes $$T/J^{n}\otimes_R\Omega_{R/\kk}\to T/J^n\otimes_T\Omega_{T/\kk}\to  T/J^n\otimes_T\Omega_{\hat{T}/R}\to 0.$$

We claim that the term on the right is a projective, i.e.~locally free, $T/J^n$-module of rank $n-s$, where $s$ is the dimension of $R$. It is sufficient to prove this locally, so let $f\in R\subset \hat T$ be an element such that $J_f$ is a complete intersection, generated by $n-s$ elements $z_1,\dots, z_{n-s}$. In this case $(T/J^{n+1})_f=R_f[z_1,\dots,z_{n-s}]/(z_1,\dots,z_{n-s})^{n+1}$ where $R_f[z_1,\dots,z_{n-s}]$ is the polynomial ring in $n-s$ variables $z_1,\dots,z_{n-s}$ over $R_f$. Since the module $\Omega_{(R_f[z_1,\dots,z_{n-s}])/R_f}$  is the free $R_f[z_1,\dots,z_{n-s}]$-module of rank $n-s$, we conclude from the preceding lemma, that $(T/J^n\otimes_T\Omega_{(T/J^{n+1})/R})_f=(T/J^n)_f\otimes_T\Omega_{(R_f[z_1,\dots,z_{n-s}])/R_f}$ is the free $(T/J^n)_f$-module of rank $n-s$. This proves the claim.

Since $\Omega_{R/\kk}$ is a projective, i.e.~locally free $R$-module of rank $s$, the module $(T/J^n)\otimes_R\Omega_{R/\kk}$ is a locally free, i.e.~projective $T/J^n$-module of rank $s$. Since the module in the middle is a free module of rank $n$ and the module on the right is a projective $T/J^n$-module of rank $n-s$, i.e.~the rank in the middle is the sum of the two ranks at the ends, we conclude that the map on the left is injective. Since this is a short exact sequence of projective $T/J^n$-modules, it splits, i.e.~we get a direct sum decomposition $$T/J^n\otimes_T\Omega_{T/\kk}=(T/J^n\otimes_R\Omega_{R/\kk})\oplus (T/J^n\otimes_T\Omega_{\hat T/R}).$$

As $n$ runs through all positive integers these direct sum decompositions form an inverse systems in which all maps are surjective. Taking the inverse limits we get the direct sum decomposition of $J$-adic completions $$\hat \Omega_{T/\kk}=(\hat T\otimes_R\Omega_{R/\kk})\oplus\hat\Omega_{\hat T/R}.$$ Since $\Omega_{T/\kk}$ is a finitely generated $T$-module, $\hat \Omega_{T/\kk}=\hat T\otimes_T\Omega_{T/\kk}$. The position of $\hat T\otimes_R\Omega_{R/\kk}$ as a submodule in $\hat T\otimes_T\Omega_{T/\kk}$ is induced by the maps on the left hand sides in the above exact sequences which are in turn induced by the $R$-algebra structure of $\hat T$. This proves the first statement of the lemma.

$\Omega_{\hat T/R}$ is not a finitely generated $\hat T$-module; yet its $J$-adic completion $\hat \Omega_{\hat T/R}$ is the inverse limit of projective $T/J^n$-modules $T/J^n\otimes_T\Omega_{\hat T/R}$ of ranks $n-s$ and all maps in this inverse system are surjective, hence $\hat \Omega_{\hat T/R}$ is a projective $\hat T$-module of rank $n-s$. Setting $P=\hat \Omega_{\hat{T}/R}$, this proves the direct sum decomposition statement of the lemma. 

Note that $P/JP=\hat T/J\otimes_{\hat{T}} \hat \Omega_{\hat T/R}=T/J\otimes_T\Omega_{\hat T/R}$, since all completions are the $J$-adic completions.  The conormal sequence $$J/J^2 \rightarrow T/J\otimes\Omega_{\hat T/R} \rightarrow \Omega_{R/R} \rightarrow 0$$ is split exact, from 10.131.10 in \cite{stacks}, since the surjection $\hat{T} \rightarrow T/J$ has a right inverse by \autoref{khom}. Since $\Omega_{R/R}=0$, this implies that $J/J^2 \rightarrow  T/J\otimes \Omega_{\hat T/R}$ is an isomorphism, and thus $P/JP \cong J/J^2$. \qed

Setting $\hat T\otimes_R\Omega_{R/\kk}=Q$ we get a direct sum decomposition  $\hat T\otimes_T\Omega_{T/\kk}=Q\oplus P$. Thus if $i\geq n-s$, then $\wedge^{n-s}P\otimes_T\wedge^{i-(n-s)}Q$ is a submodule of $\wedge^i(\hat T\otimes_T\Omega_{T/\kk})=\hat T\otimes_T\Omega^i(T)$ (in fact, $\wedge^{n-s}P\otimes_T\wedge^{i-(n-s)}Q$ is a direct summand of $\wedge^i(\hat T\otimes_T\Omega_{T/\kk})=\hat T\otimes_T\Omega^i(T)$). 

\begin{lemma}\label{differential}
The differential in the de Rham complex $\hat T\otimes_T\Omega_{T/\kk}$ sends the submodule $\wedge^{n-s}P\otimes_T\wedge^{i-(n-s)}Q$ of $\hat T\otimes_T\Omega^i(T)$ to the submodule $\wedge^{n-s}P\otimes_T\wedge^{i-(n-s)+1}Q$ of $\hat T\otimes_T\Omega^{i+1}(T)$. Thus there is a subcomplex $$0\to \wedge^{n-s}P\to \wedge^{n-s}P\otimes_T Q\to \wedge^{n-s}P\otimes_T\wedge^2Q\to\dots\to \wedge^{n-s}P\otimes_T\wedge^sQ\to 0$$ of the de Rham complex $\hat T\otimes_T\Omega_{R/\kk}$ shifted in degrees by $n-s$ (i.e.~the degree $i$ piece of the subcomplex sits in the degree $i+(n-s)$ piece of the de Rham complex).
\end{lemma}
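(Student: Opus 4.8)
The plan is to reduce the statement to a local situation in which $P$ has an $\hat T$-basis consisting of \emph{closed} $1$-forms, and then to compute the differential directly. The structural input, supplied by the preceding lemma, is that $P$ is a finitely generated projective $\hat T$-module with $P/JP\cong J/J^2$, and that locally on $\Spec R$, where $J$ is generated by a regular sequence $z_1,\dots,z_{n-s}$, the forms $dz_1,\dots,dz_{n-s}\in P$ map to an $R$-basis of $P/JP\cong J/J^2$ (the identification being the one produced in the proof of that lemma, sending the class of $dz_m$ to the class of $z_m$).

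The first thing I would record is the following reduction. Write $i=(n-s)+j$ and let $\Phi$ be the $k$-linear map sending $\omega\in\wedge^{n-s}P\otimes\wedge^j Q$ to the image of $d\omega$ in the complement $\bigoplus_{a\le n-s-1}\wedge^{a}P\otimes\wedge^{i+1-a}Q$ of $\wedge^{n-s}P\otimes\wedge^{j+1}Q$ inside $\hat T\otimes_T\Omega^{i+1}(T)$ (there is no summand with $a\ge n-s+1$ since $P$ has rank $n-s$). The claim to be proved is exactly that $\Phi=0$. Now $\Phi$ is in fact $\hat T$-\emph{linear}: for $b\in\hat T$ the Leibniz correction $db\wedge\omega$ already lies in $\wedge^{n-s}P\otimes\wedge^{j+1}Q$, because if $db=(db)_P+(db)_Q$ then $(db)_P\wedge\omega\in\wedge^{n-s+1}P\otimes\wedge^jQ=0$ while $(db)_Q\wedge\omega\in\wedge^{n-s}P\otimes\wedge^{j+1}Q$, so its image under the projection vanishes and $\Phi(b\omega)=b\,\Phi(\omega)$. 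Hence it suffices to check $\Phi=0$ after localizing $\hat T$ at each of its maximal ideals $\mathfrak m$. Since $J\hat T$ lies in the Jacobson radical of the $J$-adically complete ring $\hat T$, every such $\mathfrak m$ contains $J\hat T$ and so corresponds to a point of $\Spec R$; localizing $R$ first at a suitable $f\notin\mathfrak m$ we may assume $J$ is generated by a regular sequence $z_1,\dots,z_{n-s}$. Over the local ring $\hat T_{\mathfrak m}$, the images of $dz_1,\dots,dz_{n-s}$ then generate the free module $P_{\mathfrak m}/JP_{\mathfrak m}\cong J_{\mathfrak m}/J_{\mathfrak m}^2$, so by Nakayama $dz_1,\dots,dz_{n-s}$ is an $\hat T_{\mathfrak m}$-basis of $P_{\mathfrak m}$, consisting of closed forms.

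With this in hand, the computation is short. A typical element of $\wedge^{n-s}P_{\mathfrak m}\otimes\wedge^j Q_{\mathfrak m}$ is an $\hat T_{\mathfrak m}$-combination of forms $a\,dz_1\wedge\cdots\wedge dz_{n-s}\wedge\eta$ with $a\in\hat T_{\mathfrak m}$ and $\eta\in\wedge^j Q_{\mathfrak m}$, and since $dz_1\wedge\cdots\wedge dz_{n-s}$ is closed,
\[
d\bigl(a\,dz_1\wedge\cdots\wedge dz_{n-s}\wedge\eta\bigr)=da\wedge dz_1\wedge\cdots\wedge dz_{n-s}\wedge\eta\ \pm\ a\,dz_1\wedge\cdots\wedge dz_{n-s}\wedge d\eta .
\]
In the first term, the $P$-component of $da$ produces a repeated $dz_m$ and dies, while the $Q$-component gives a form in $\wedge^{n-s}P_{\mathfrak m}\otimes\wedge^{j+1}Q_{\mathfrak m}$. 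In the second term, only the $\wedge^{j+1}Q_{\mathfrak m}$-component of $d\eta$ survives the wedge with $dz_1\wedge\cdots\wedge dz_{n-s}\in\wedge^{n-s}P_{\mathfrak m}$, since any component carrying a factor from $P_{\mathfrak m}$ forces $\wedge^{>n-s}P_{\mathfrak m}=0$; so this term too lies in $\wedge^{n-s}P_{\mathfrak m}\otimes\wedge^{j+1}Q_{\mathfrak m}$. This gives $\Phi_{\mathfrak m}=0$, hence $\Phi=0$ and the asserted containment. Letting $j$ vary and restricting the de Rham differential then produces the subcomplex, which is concentrated in degrees $n-s,\dots,n$ because $Q$ has rank $s$; this is exactly the degree shift by $n-s$.

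The step I expect to be the main obstacle is the passage to the local model: one must make sure that $dz_1,\dots,dz_{n-s}$ genuinely become an $\hat T$-basis of $P$ after localization, which forces the localization to be taken at a maximal ideal of $\hat T$ (so that $J$ lies in the Jacobson radical and Nakayama applies) rather than merely over an open subset of $\Spec R$; and this is precisely why it is worth isolating the $\hat T$-linearity of $\Phi$, so that the reduction to localizations at maximal ideals is legitimate even though the de Rham differential is not itself $\hat T$-linear.
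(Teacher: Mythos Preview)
Your approach is in the same spirit as the paper's: reduce to a local situation and compute with an explicit basis of closed $1$-forms for $P$. The paper localizes at maximal ideals of $\hat T$ and then \emph{completes}, working in the explicit power-series ring $S=R'[[y_1,\dots,y_{n-s}]]$; you localize only, justified by the neat observation that the obstruction $\Phi$ is $\hat T$-linear even though $d$ is not.

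There is, however, a real gap at the point you yourself flag. You assert that $dz_1,\dots,dz_{n-s}\in P$, but Lemma~3.5 does not give this. What it gives is $P/JP\cong J/J^2$, and that isomorphism factors through the quotient $E=\hat T\otimes_T\Omega_{T/k}\to E/Q$: the element of $P$ that maps to $[z_m]$ is the $P$-\emph{component} $\pi_m$ of $dz_m$, which differs from $dz_m$ by some $q_m\in Q$. Nakayama then gives you the basis $\pi_1,\dots,\pi_{n-s}$ of $P_{\mathfrak m}$, but $d\pi_m=-dq_m$ has no reason to vanish, so your computation breaks down. This is not merely a technicality: for an arbitrary complement $P$ to $Q$ the lemma can genuinely fail. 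For instance, with $n=4$, $s=2$, $Q=\hat T\,dt_3\oplus\hat T\,dt_4$, and $P=\hat T(dt_1+t_3\,dt_4)\oplus\hat T\,dt_2$, one has $d\bigl((dt_1+t_3\,dt_4)\wedge dt_2\bigr)=dt_2\wedge dt_3\wedge dt_4$, which does not lie in $\wedge^2P\otimes Q$.

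The paper's passage to completion is precisely what allows it to identify $P$ (after base change to $S$) with the span of the closed forms $dy_1,\dots,dy_{n-s}$ in the explicit model $S=R'[[y_1,\dots,y_{n-s}]]$, where $Q$ is spanned by the $dx_i$ and the computation is immediate. Your localization-only route would need an additional argument that the particular splitting produced in Lemma~3.5 yields closed local generators of $P_{\mathfrak m}$, and as the example shows this is not automatic for a generic splitting.
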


\emph{Proof.} Since the de Rham complex commutes with localization, it is enough to prove this statement after localization at every maximal ideal $\mathfrak m$ of $\hat T$. Since completion of a local ring at the maximal ideal is faithfully flat, for every submodule $L\subset N$ of a $\hat T$-module $N$, we have that $\tilde L\cap N=L$, where $\tilde L$ is the $\mathfrak m$-adic completion of $L$. Hence it is enough to prove the statement after completion with respect to $\mathfrak m$. 

Let $x_1,\dots, x_s\in R$ generate the maximal ideal $\mathfrak m\cap R$ of $R$ and let $y_1,\dots, y_{n-s}\in J$ generate $J_{\mathfrak m}$. The $\mathfrak m\cap R$-adic completion of $R$ is the formal power series ring $R'=\kk[[x_1,\dots, x_s]]$ and the $\mathfrak m$-adic completion of $\hat T$ is the formal power series ring $S=R'[[y_1,\dots, y_{n-s}]]$.  The $\mathfrak m$-adic completion of $\Omega_{T/\kk}$, i.e.~$S\otimes_T\Omega_{T/\kk}$, is the free $S$-module on $dx_1,\dots, dx_s,dy_1,\dots, dy_{n-s}$ and the submodule $S\otimes_R\Omega_{R/\kk}$ of $S\otimes_T\Omega_{T/\kk}$, whose existence is claimed in \autoref{tmap}, is the free $S$-submodule generated by $dx_1,\dots,dx_s$. 

Set $\tilde y=dy_1\wedge\dots\wedge dy_{n-s}$. The module $S\otimes_T(\wedge^{n-s}P\otimes_T\wedge^{i-(n-s)}Q)$ is the free $S$-module on all elements $dx_{j_1}\wedge\dots \wedge dx_{j_{i-(n-s)}}\wedge\tilde y$ as $\{j_1,\dots,j_{i-(n-s)}\}$ run through all $(i-(n-s))$-element subsets of $\{1,\dots s\}$, for every $i$. The de Rham differential clearly sends every $dx_{j_1}\wedge\dots \wedge dx_{j_{i-(n-s)}}\wedge\tilde y$ to the $S$-submodule of $S\otimes_T(\wedge^{n-s}P\otimes_T\wedge^{i-(n-s)+1}Q)$ spanned by all elements $dx_{j_1}\wedge\dots \wedge dx_{j_{i-(n-s)+1}}\wedge\tilde y$. \qed

\begin{lemma}\label{annihilator} 
Let $M$ be the submodule of $H^{n-s}_J(\hat T)\otimes_T\wedge^{n-s}P$ annihilated by $J$. Then $M\cong T/J=R$.
\end{lemma}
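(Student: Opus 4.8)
Write $c=n-s$. The plan is to rewrite $M$ as the tensor product of an invertible $R$-module with its inverse, which is canonically trivial. Being annihilated by $J$ is the same as lying in the image of $\Hom_{\hat T}(\hat T/J,-)$, so $M=\Hom_{\hat T}\bigl(R,\,H^{c}_{J}(\hat T)\otimes_{\hat T}\wedge^{c}P\bigr)$. Since $P$ is projective of rank $c$, the module $\wedge^{c}P$ is invertible over $\hat T$, so $-\otimes_{\hat T}\wedge^{c}P$ commutes with $\Hom_{\hat T}(R,-)$ (use $N\otimes_{\hat T}\wedge^{c}P\cong\Hom_{\hat T}((\wedge^{c}P)^{\vee},N)$ and Hom--tensor adjunction), and, since $\Hom_{\hat T}(R,-)$ lands in $R$-modules, $M\cong\Hom_{\hat T}\bigl(R,H^{c}_{J}(\hat T)\bigr)\otimes_{R}\bigl(\wedge^{c}P\otimes_{\hat T}R\bigr)$. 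As exterior powers of a projective commute with base change and $P/JP\cong J/J^{2}$ was established above, $\wedge^{c}P\otimes_{\hat T}R=\wedge^{c}(J/J^{2})$, which is an invertible $R$-module because $J/J^{2}$ is locally free of rank $c$ ($J$ being locally a complete intersection of height $c$).

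Next I would identify $\Hom_{\hat T}(R,H^{c}_{J}(\hat T))$ with the inverse of $\wedge^{c}(J/J^{2})$. Since $J$ is locally generated by a regular sequence of length $c$, one has $H^{i}_{J}(\hat T)=0$ for $i\neq c$ — the vanishing already used in constructing the morphism $E^{\bullet,\bullet}_{\bullet}\to\tilde{\mathbf E}^{\bullet,\bullet}_{\bullet}$. Because $\Gamma_{J}$ preserves injectives and $\Hom_{\hat T}(R,-)\circ\Gamma_{J}=\Hom_{\hat T}(R,-)$ (any $\hat T$-map out of $R$ has $J$-torsion image), the composite-functor spectral sequence $\Ext^{p}_{\hat T}\bigl(R,H^{q}_{J}(\hat T)\bigr)\Rightarrow\Ext^{p+q}_{\hat T}(R,\hat T)$ collapses and gives $\Hom_{\hat T}(R,H^{c}_{J}(\hat T))\cong\Ext^{c}_{\hat T}(R,\hat T)$. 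A direct Koszul computation (locally $J=(y_{1},\dots,y_{c})$ is a regular sequence, resolving $R$) then yields the canonical isomorphism $\Ext^{c}_{\hat T}(R,\hat T)\cong\Hom_{R}\bigl(\wedge^{c}(J/J^{2}),R\bigr)$, the determinant of the normal module. Combining, $M\cong\Hom_{R}\bigl(\wedge^{c}(J/J^{2}),R\bigr)\otimes_{R}\wedge^{c}(J/J^{2})\cong R$ via the evaluation pairing.

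In the more computational spirit of the preceding lemmas one can instead argue locally: $M$ is coherent over $R$; completing at a maximal ideal $\mathfrak m\supseteq J$ as in the previous lemma turns $\hat T$ into $S=R'[[y_{1},\dots,y_{c}]]$ with $JS=(y_{1},\dots,y_{c})$ and $\wedge^{c}P$ freely generated by $dy_{1}\wedge\dots\wedge dy_{c}$, and a \v{C}ech computation shows the submodule of $H^{c}_{J}(S)$ killed by $J$ is the cyclic module $S\cdot[1/(y_{1}\cdots y_{c})]$, with annihilator $JS$, so $M$ localizes to the free rank-one $R$-module on the residue form $[1/(y_{1}\cdots y_{c})]\otimes dy_{1}\wedge\dots\wedge dy_{c}$. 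One then checks this generator is independent of the chosen regular sequence: under $\mathbf y'=A\mathbf y$ with $\det A$ a unit, the class $[1/(y'_{1}\cdots y'_{c})]$ scales by $(\det A)^{-1}$ while $dy'_{1}\wedge\dots\wedge dy'_{c}\equiv(\det A)\,dy_{1}\wedge\dots\wedge dy_{c}\bmod J\wedge^{c}P$, and since the first factor is killed by $J$ the two corrections cancel; hence the local generators glue to a global generator of $M$, giving the isomorphism $R\to M$.

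I expect the main obstacle to be exactly this last point: computing $M$ locally is routine, but producing an honest global isomorphism $M\cong R$ (rather than merely showing $M$ is invertible) requires the canonical cancellation between $\wedge^{c}(J/J^{2})$, coming from $\wedge^{c}P$, and its inverse $\Ext^{c}_{\hat T}(R,\hat T)$, coming from $H^{c}_{J}(\hat T)$ — this is genuinely needed because the Picard group of $R$ need not vanish for a general smooth affine $X$. This cancellation is the fundamental local isomorphism for a regular embedding in the first approach, and the transformation rule for top local cohomology classes under change of generators in the second. The only other point requiring care is the collapse of the spectral sequence identifying $\Hom_{\hat T}(R,H^{c}_{J}(\hat T))$ with $\Ext^{c}_{\hat T}(R,\hat T)$, which again rests on $H^{i}_{J}(\hat T)=0$ for $i\neq c$.
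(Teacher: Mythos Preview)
Your first approach is exactly the paper's proof: collapse the Grothendieck spectral sequence for $\Hom_{\hat T}(\hat T/J,-)\circ\Gamma_J$ to get $\Hom_{\hat T}(R,H^{n-s}_J(\hat T))\cong\Ext^{n-s}_{\hat T}(R,\hat T)$, compute this Ext via a Koszul resolution as $\Hom_{R}(\wedge^{n-s}(J/J^2),R)$, pull out the invertible factor $\wedge^{n-s}P$ and reduce it mod $J$ to $\wedge^{n-s}(J/J^2)$, and cancel the two in the Picard group of $R$. The only cosmetic difference is that the paper builds the Koszul resolution globally from the surjection $P\to J$ obtained by lifting $P/JP\cong J/J^2$ (and then checking surjectivity via Nakayama at every maximal ideal of $\hat T$), rather than invoking the fundamental local isomorphism; your second, explicit local-and-glue argument with the transformation rule for generators is a correct alternative the paper does not give.
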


\begin{proof}The composition of functors ${\rm Hom}_{\hat T}(\hat T/J,-)={\rm Hom}_{\hat T}(\hat T/J,\Gamma_J(-))$ leads to the spectral sequence $$E_2^{p,q} = {\rm Ext}^p_{\hat T}(\hat T/J,H^q_J(\hat T))\Longrightarrow {\rm Ext}^{p+q}_{\hat T}(\hat T/J,\hat T).$$ Since $\hat T/J$ is regular, $H^q_J(\hat T)\ne 0$ if and only if $n-s=q$. Thus the spectral sequence yields an isomorphism $${\rm Hom}_{\hat T}(\hat T/J,H^{n-s}_J(\hat T))\cong {\rm Ext}^{n-s}_{\hat T}(\hat T/J,\hat T).$$

Recall that $P/JP\cong J/J^2$. Thus there is a surjection $P\to J/J^2$. Since $P$ is projective, this surjection lifts to a $\hat T$-linear
map $\phi:P\to J$. Let $J'\subset J$ be the image of $\phi$. By Nakayama's lemma, $J'_\mathfrak m=J_\mathfrak m$ for every maximal ideal $\mathfrak m$ that contains $J$. We claim that every maximal ideal of $\hat T$ contains $J$. Indeed, assume $\mathfrak m$ is a maximal ideal that does not contain $J$. Then $\mathfrak m+J=\hat T$, i.e.~there exist elements $y\in J$ and $x\in \mathfrak m$ with $x+y=1$. But then $x=1-y$ is invertible in $\hat T$, for $(1-y)^{-1}=1+y+y^2+y^3+\dots\in \hat T$. Therefore $x$ cannot be contained in any maximal ideal. This contradiction proves the claim. Thus $J'_\mathfrak m=J_\mathfrak m$ for every maximal ideal $\mathfrak m$ of $\hat T$, i.e.~$J'=J$. In other words, the map $\phi:P\to J$ is surjective.

The map $\phi:P\to J$ can be completed to a Koszul resolution $\wedge^\bullet P$ of $\hat T/J$ as follows:
$$0\to \wedge^{n-s}P\to \wedge^{n-s-1}P\to \dots\to\wedge ^2P\to \wedge P(=P)\to \wedge^0P(=\hat T)\to0$$
where the $\hat T$-linear differential $d_t:\wedge^tP\to\wedge^{t-1}P$ is given by $$d_t(x_1\wedge\dots\wedge x_t)=\Sigma_j(-1)^j\phi(x_j)(x_1\wedge\dots\wedge x_{j-1}\wedge x_{j+1}\dots\wedge x_t).$$
This complex is a resolution of $\hat T/J$ in the sense that its 0-th homology is $\hat T/J$ while all other homology groups vanish.

The module Ext$^{n-s}_{\hat T}(\hat T/J,\hat T)$ is the $(n-s)$th homology of the complex Hom$_{\hat T}(\wedge^\bullet P,\hat T)$, i.e.~it is the cokernel of the map $${\rm Hom}_{\hat T}(d_{n-s},\hat T):{\rm Hom}_{\hat T}(\wedge^{n-s-1}P, \hat T)\to {\rm Hom}_{\hat T}(\wedge^{n-s}P,\hat T).$$
The image of this map is $J{\rm Hom}_{\hat T}(\wedge^{n-s}P,\hat T)$, i.e.~$${\rm Ext}^{n-s}_{\hat T}(\hat T/J,\hat T)\cong {\rm Hom}_{\hat T}(\wedge^{n-s}P,\hat T)/J{\rm Hom}_{\hat T}(\wedge^{n-s}P,\hat T)\cong {\rm Hom}_{T/J}(\wedge^{n-s}P/J\wedge^{n-s}P, T/J).$$
Since $\wedge^{n-s}P/(J\wedge^{n-s}P)\cong \wedge^{n-s}(P/JP)=\wedge^{n-s}(J/J^2)$, we conclude that $${\rm Ext}^{n-s}_{\hat T}(\hat T/J,\hat T)\cong {\rm Hom}_{T/J}(\wedge^{n-s}(J/J^2),T/J).$$
Thus the module ${\rm Hom}_{\hat T}(\hat T/J,H^{n-s}_J(\hat T))$, i.e.~the submodule of $H^{n-s}_J(\hat T)$ annihilated by $J$, is isomorphic to ${\rm Hom}_{T/J}(\wedge^{n-s}(J/J^2),T/J).$

Let $M={\rm Hom}_{\hat T}(\hat T/J, H^{n-s}_J(\hat T)\otimes_T\wedge^{n-s}P)$, i.e.~$M$ is the submodule of $H^{n-s}_J(\hat T)\otimes_T\wedge^{n-s}P$ annihilated by $J$. Since the module $\wedge^{n-s}P$ is flat over $T$, we conclude that $M\cong {\rm Hom}_{\hat T}(\hat T/J, H^{n-s}_J(\hat T))\otimes_T\wedge^{n-s}P$. This, as has just been proven, is isomorphic to ${\rm Hom}_{T/J}(\wedge^{n-s}(J/J^2),T/J)\otimes_T\wedge^{n-s}P$. Since the module ${\rm Hom}_{T/J}(\wedge^{n-s}(J/J^2),T/J)$ is annihilated by $J$, this tensor product is isomorphic to ${\rm Hom}_{T/J}(\wedge^{n-s}(J/J^2),T/J)\otimes_{T/J}(\wedge^{n-s}P/J\wedge^{n-s}P)$ which (since $\wedge^{n-s}P/J\wedge^{n-s}P=\wedge^{n-s}(P/JP=\wedge^{n-s}(J/J^2)$ is isomorphic to ${\rm Hom}_{T/J}(\wedge^{n-s}(J/J^2),T/J)\otimes_{T/J}\otimes\wedge^{n-s}(J/J^2)$. This last module is isomorphic to $T/J$ since $\wedge^{n-s}(J/J^2)$ is an invertible $T/J$-module and ${\rm Hom}_{T/J}(\wedge^{n-s}(J/J^2),T/J)$ is the inverse of this module in the Picard group of $T/J$.
\end{proof}

At last we are ready to prove \autoref{morphism}.

\emph{Proof of \autoref{morphism}.} It is enough to prove that the de Rham complex of $R$ is a subcomplex of the de Rham complex of $H^{n-s}_J(\hat T)$, shifted in degrees by $n-s$ (i.e.~the degree $i$ piece of the de Rham complex of $R$ sits in the degree $i+n-s$ piece of the de Rham complex of $H^{n-s}_J(\hat T)$). 

Denote by $\wedge^{n-s}P\otimes_T\wedge^\bullet Q$ the subcomplex of $\hat T\otimes_T\Omega^\bullet(T)$ whose existence was proved in \autoref{differential}. Tensoring with $H^{n-s}_J(T)$ we see that $H^{n-s}_J(T)\otimes_T\wedge^{n-s}P\otimes_T\wedge^\bullet Q$ is a subcomplex of $H^{n-s}_J(T) \otimes_T \Omega^\bullet(T)$ shifted in degrees by $n-s$ (tensoring this subcomplex results in a subcomplex because in every degree $i$ the submodule $\wedge^{n-s}P\otimes_T\wedge^iQ$ is a direct summand of $\hat T\otimes_T\Omega^{i+n-s}(T)$). 

We claim that the differentials in the complex $H^{n-s}_J(T) \otimes_T \wedge^{n-s}P\otimes_T\wedge^\bullet Q$ send elements annihilated by $J$ to elements annihilated by $J$ (this is certainly not true for the differentials in the ambient complex $H^{n-s}_J(T) \otimes_T \Omega^\bullet(T))$, i.e.~${\rm Hom}_T(T, H^{n-s}_J(T) \otimes_T \wedge^{n-s}P\otimes_T\wedge^\bullet Q)$ is a complex. It is sufficient to show this after completion at every maximal ideal of $\hat T$, in which case (using notation from the proof of \autoref{differential}) $H^{n-s}_J(\hat T)$ is the free $R'$-module on the monomials $y_1^{i_1}\cdots y_{{n-s}}^{i_{n-s}}$ where $i_1,\dots, i_{n-s}$ run through all negative integers $\leq -1$ and the annihilator of $J$ in $H^{n-s}_J(\hat T)$ is the $R'$-submodule spanned by $(y_1\cdots y_{n-s})^{-1}$. The differentials in the complex $H^{n-s}_J(T) \otimes_T \wedge^{n-s}P\otimes_T\wedge^\bullet Q$ involve differentiations only with respect to $x_1,\dots, x_s$ and leave the $y_1,\dots, y_{n-s}$ alone. Thus they send the submodule spanned by $(y_1\cdots y_{n-s})^{-1}$ to the submodule spanned by $(y_1\cdots y_{n-s})^{-1}$. This proves the claim. Thus ${\rm Hom}_T(T, H^{n-s}_J(T) \otimes_T \wedge^{n-s}P\otimes_T\wedge^\bullet Q)$ is a complex, in fact a subcomplex of $H^{n-s}_J(T) \otimes_T \wedge^{n-s}P\otimes_T\wedge^\bullet Q$.

\autoref{annihilator} shows that the degree 0 piece of this complex is isomorphic to $R=T/J$, while the formula $\hat{T}/J \otimes_{\hat{T}} Q = (\hat{T}/J) \otimes_{\hat{T}} \hat{T} \otimes_R \Omega_{R/\kk}= \hat{T}/J \otimes_R \Omega_{R/\kk} = \Omega_{R/\kk}$ shows that the degree 1 piece is isomorphic to $\Omega_{R/\kk}$. Thus one sees (again by going to the completion at every maximal ideal and considering the explicit formulas for the differentials in the subcomplex) that the differentials in this subcomplex are the differentials in $\Omega^\bullet(R)$, the de Rham complex of $R$. This completes the proof of \autoref{morphism}. \qed

\section{The isomorphism on the $E_2$ page}

In this section we show the morphism of spectral sequences found in the previous section is an isomorphism starting at the $E_2$ page. That is, we show the following: 

\begin{prop}\label{e2iso} The map induced on the $E_2$ page by the morphism of spectral sequences in \autoref{morphism} is an isomorphism. Hence by Lemma 2.5 the induced map on the $E_r$ page for every $r\geq 2$ also is an isomorphism.
\end{prop}

The general approach is to localize, demonstrate an isomorphism on the localizations through induction on the difference in dimension, and then glue the local isomorphisms together.  Again, let $Y \rightarrow X$ be an embedding of $Y$ into an arbitrary smooth affine scheme $X$, and let $X \rightarrow \mathbb A^n$ be an embedding of $X$ as a closed subscheme of $\mathbb{A}^n$.  Let $S$, $R$ and $T$ be the coordinate rings associated with these varieties.  That is, $Y= \Spec S,\ X=\Spec R$, and $T=\kk[x_1,...,x_n]$.  Let $I$ be the kernel of the surjection $T \rightarrow S$, and let $J$ be the kernel of the surjection $T \rightarrow R$.  

Since $X$ is a smooth scheme, $\Omega_{R/\kk}$ is locally free of rank $s=\dim X$.  Thus we may find $f_1, \dots ,f_m \in R$ so that the $\Spec R_{f_i}$ form an open cover of $\Spec R$, and so that for each $f_i$ there exist $y_{i1},\dots,y_{is} \in R$ so $dy_{i1},\dots,dy_{is}$ generate $\Omega_{R_{f_i}/\kk}$ as a $T_{f_i}$-module.

Write $Y_{f_i}$ for $\Spec S_{f_i}$ and $X_{f_i}$ for $\Spec R_{f_i}$.  We show our claim first for the local case $Y_{f_i} \rightarrow X_{f_i}$ 
and then show how to extend this to the situation $Y \rightarrow X$.

Fix some $f_i$ and say $\Omega_{R_{f_i}/\kk}$ is generated by $dy_1,...,dy_s$. We know there is an exact sequence for differentials over $\kk$ from \autoref{conormal}:

$$J_{f_i}/J_{f_i}^2 \rightarrow R_{f_i} \otimes_{T_{f_i}} \Omega_{T_{f_i}/\kk} \rightarrow \Omega_{R_{f_i}/\kk}\rightarrow 0,$$
which shows that we can extend the set of generators of $\Omega_{R_{f_i}/\kk}$ to a generating set of $\Omega_{T_{f_i}/\kk}$ by picking $n-s$ elements $dz_1,...,dz_{n-s}$ with $J_{f_i}=(z_1\ ...\ z_{n-s})$.

 \begin{lemma} Notation being as above, we may view the $J_{f_i}$-adic completion of $T_{f_i}$, which we will denote $\hat{T}_{f_i}$, as a power series ring $\hat{T}_{f_i} \cong (T_{f_i}/J)[[z_1,\dots, z_{n-s}]]=R_{f_i}[[z_1,\dots,z_{n-s}]]$.
 \end{lemma}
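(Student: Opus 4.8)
The plan is to write the isomorphism down explicitly and then verify it by passing to associated graded rings. First I would note that the (localized) hypotheses of Lemma~3.3 are met here: $T_{f_i}$, being a localization of a polynomial ring, is formally smooth over $k$, and $R_{f_i}$, being a localization of the smooth ring $R$, is again smooth over $k$ and hence formally smooth; so the natural surjection $\psi:\hat T_{f_i}\to\hat T_{f_i}/J\hat T_{f_i}=T_{f_i}/J_{f_i}=R_{f_i}$ admits a $k$-algebra section $\phi:R_{f_i}\to\hat T_{f_i}$. Regarding $\hat T_{f_i}$ as an $R_{f_i}$-algebra through $\phi$, and using that $z_1,\dots,z_{n-s}\in J\hat T_{f_i}$ while $\hat T_{f_i}$ is $J$-adically complete, the rule $Z_\ell\mapsto z_\ell$ extends to a well-defined $R_{f_i}$-algebra homomorphism
$$\Phi:R_{f_i}[[Z_1,\dots,Z_{n-s}]]\longrightarrow\hat T_{f_i},$$
and the content of the lemma is precisely that $\Phi$ is an isomorphism.

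To verify this, I would observe that $\Phi$ is a continuous homomorphism of filtered rings, the source carrying the $(Z_1,\dots,Z_{n-s})$-adic filtration and the target the $J$-adic filtration: indeed $\Phi$ carries $(Z_1,\dots,Z_{n-s})^m$ into $J^m\hat T_{f_i}$, and both rings are separated and complete for these filtrations. By the standard principle that a continuous homomorphism of complete separated filtered rings which induces an isomorphism on associated graded rings is itself an isomorphism, it then suffices to check that $\operatorname{gr}(\Phi)$ is an isomorphism. On the source, $\operatorname{gr}_{(Z)}R_{f_i}[[Z_1,\dots,Z_{n-s}]]$ is the polynomial ring $R_{f_i}[Z_1,\dots,Z_{n-s}]$. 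On the target, $\operatorname{gr}_J\hat T_{f_i}\cong\operatorname{gr}_{J_{f_i}}T_{f_i}$, since passing to the $J$-adic completion of a Noetherian ring does not change the associated graded ring; and because $T_{f_i}$ is Cohen--Macaulay and, $X$ being smooth of dimension $s$, the ideal $J_{f_i}=(z_1,\dots,z_{n-s})$ has height $n-s$ equal to its number of generators, the sequence $z_1,\dots,z_{n-s}$ is regular, whence $\operatorname{gr}_{J_{f_i}}T_{f_i}\cong R_{f_i}[\bar z_1,\dots,\bar z_{n-s}]$ with $\bar z_\ell$ the class of $z_\ell$ in $J_{f_i}/J_{f_i}^2$. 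Under these identifications $\operatorname{gr}(\Phi)$ sends $Z_\ell$ to $\bar z_\ell$ and is the identity in degree $0$ (because $\phi$ is a section of $\psi$), so it is an isomorphism of graded $R_{f_i}$-algebras, and $\Phi$ is an isomorphism.

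I expect the one genuinely substantive point to be the identification $\operatorname{gr}_{J_{f_i}}T_{f_i}\cong R_{f_i}[\bar z_1,\dots,\bar z_{n-s}]$, i.e.~the verification that $z_1,\dots,z_{n-s}$ is a regular sequence. This is where the smoothness of $X$ is used: it makes $J$ locally a complete intersection of height $n-s$, so that, after the localizations already performed in the setup, the chosen generators $z_1,\dots,z_{n-s}$ are $n-s$ in number for an ideal of height $n-s$ in a Cohen--Macaulay ring, hence form a regular sequence. The remaining ingredients — availability of the section $\phi$ after localization, well-definedness and filtered continuity of $\Phi$, and the ``isomorphism on $\operatorname{gr}$'' criterion — are routine and should each take only a line.
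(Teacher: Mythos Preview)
Your argument is correct and follows the same approach as the paper: both obtain the section $\phi:R_{f_i}\to\hat T_{f_i}$ from formal smoothness of $R_{f_i}$, and both use that $J_{f_i}$ is generated by $n-s$ elements of height $n-s$ in the complete ring $\hat T_{f_i}$. The paper simply asserts that the power series description ``follows'' from these two facts, whereas you supply the standard justification via the associated graded ring; your version is thus a fleshed-out form of the paper's sketch rather than a different route.
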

 
 \begin{proof}
 Since $R_{f_i}$ is formally smooth over $\kk$, there exists a ring homomorphism $\phi:R_{f_i}\to \hat T_{f_i}$ such that the composition of $\phi$ with the natural surjection $\hat T_{f_i}\to \hat T_{f_i}/J$ is the identity map $R_{f_i}\to \hat T_{f_i}/J=R_{f_i}$. Denote $\phi(R_{f_i})$ by $R_{f_i}$. Since $\hat T_{f_i}$ is complete and $J$ is an ideal of height $n-s$ generated by $z_1,\dots,z_{n-s}$, it follows that   $T_{f_i}=R_{f_i}[[z_1,\dots,z_{n-s}]]$.
 \end{proof}

\begin{lemma}\label{completion} The modules in the complex $H^{\bullet}_I(T_{f_i})$ have a natural structure as $\hat{T}_{f_i}$-modules.
 \end{lemma}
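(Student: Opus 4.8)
The plan is to use the inclusion $J\subseteq I$, which persists after localizing at $f_i$, so that $J_{f_i}\subseteq I_{f_i}$ inside $T_{f_i}$. Each local cohomology module $H^q_I(T_{f_i})$ is $I_{f_i}$-power torsion: it is a subquotient of localizations of $T_{f_i}$, and concretely one can compute it as the cohomology of the \v{C}ech complex on a generating set of $I_{f_i}$, or by applying $\Gamma_I(-)$ to an injective resolution of $T_{f_i}$. Since $J_{f_i}\subseteq I_{f_i}$ gives $J_{f_i}^n\subseteq I_{f_i}^n$ for every $n$, any element annihilated by a power of $I_{f_i}$ is annihilated by the same power of $J_{f_i}$; hence $H^q_I(T_{f_i})=\Gamma_J\bigl(H^q_I(T_{f_i})\bigr)=\bigcup_n\bigl(0:_{H^q_I(T_{f_i})}J^n\bigr)$, a filtered union of its $J^n$-torsion submodules.

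The next step is to observe that, $T_{f_i}$ being Noetherian, the canonical map $\hat T_{f_i}/J^n\hat T_{f_i}\to T_{f_i}/J^n$ is an isomorphism for every $n$. Therefore each $J^n$-torsion submodule $\bigl(0:_{H^q_I(T_{f_i})}J^n\bigr)$, which is a $T_{f_i}/J^n$-module, is automatically an $\hat T_{f_i}$-module via the surjection $\hat T_{f_i}\twoheadrightarrow \hat T_{f_i}/J^n\hat T_{f_i}=T_{f_i}/J^n$; these $\hat T_{f_i}$-module structures are compatible with the inclusions of the filtration, so the union $H^q_I(T_{f_i})$ acquires a canonical $\hat T_{f_i}$-module structure extending its $T_{f_i}$-module structure. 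The same reasoning shows that a $T_{f_i}$-linear map between two such $J$-power torsion modules is automatically $\hat T_{f_i}$-linear (on each $J^n$-torsion piece it is $T_{f_i}/J^n$-linear), so any maps among the modules $H^q_I(T_{f_i})$ appearing in the complex $H^\bullet_I(T_{f_i})$ are $\hat T_{f_i}$-linear; thus the whole complex is a complex of $\hat T_{f_i}$-modules.

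I do not expect a real obstacle here: the content is the standard fact that a $J$-power torsion module over a Noetherian ring is a module over the $J$-adic completion. The only points deserving a sentence of justification are that $H^q_I(T_{f_i})$ is genuinely $J$-power torsion, which uses nothing beyond $J\subseteq I$, and the identification $\hat T_{f_i}/J^n\hat T_{f_i}\cong T_{f_i}/J^n$, which uses Noetherianness; naturality of the construction is visible from the description of $H^q_I(T_{f_i})$ as $\bigcup_n\Hom_{T_{f_i}}(T_{f_i}/J^n,-)$ applied to itself.
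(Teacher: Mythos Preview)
Your proof is correct and follows essentially the same approach as the paper: both argue that since $J\subseteq I$ (hence $J^n\subseteq I^n$), the $I$-power torsion modules $H^q_I(T_{f_i})$ are $J$-power torsion, and therefore acquire a canonical $\hat T_{f_i}$-module structure via $\hat T_{f_i}/J^n\hat T_{f_i}\cong T_{f_i}/J^n$. Your write-up is in fact more careful than the paper's (you spell out the Noetherian identification and the compatibility of the filtration, and you also verify $\hat T_{f_i}$-linearity of the maps), whereas the paper additionally remarks that $H^q_I(T_{f_i})\cong H^q_I(\hat T_{f_i})$ as $\hat T_{f_i}$-modules; this is not needed for the lemma as stated but is used downstream.
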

\begin{proof}   Since $I \subset J$, every element of those modules is killed by a power of $J$, so each inherits a $\hat{T}_{f_i}$-module structure from its $T_{f_i}$ structure.  It's well-known that local cohomology commutes with completions, and it follows that each module is isomorphic to the local cohomology of the completion as a $\hat{T}_{f_i}$-module.
\end{proof}

At this point we need a definition of the plus operation from \cite{emilyluis} (this definition is motivated by the celebrated Kashiwara equivalence theorem):  

\begin{definition}
Let $A$ be a noetherian ring, and let $B=A[[z]]$.  The plus operation, is the functor from the category of $A$-modules to the category of $B$-modules given by $M_+ = M\otimes_A(B_z/B)=M\cdot\frac{1}{z}\oplus M\cdot\frac{1}{z^2}\oplus\dots\oplus M\cdot \frac{1}{z^i}\oplus\dots.$ 
\end{definition}

Multiplication by $z$ snds $\frac{m}{z^i}$ to $\frac{m}{z^{i-1}}$. Thus the annihilator of $z$ in $M_+$ is the $A$-module $M\cdot\frac{1}{z}$. If $A$ is a regular finitely generated $\kk$-algebra and $M$ is a $D(A,\kk)$-module, then $M_+$ is a $D(B,\kk)$-module (namely, $\partial_z$ sends $\frac{m}{z^i}$ to $(-i)\frac{m}{z^{i+1}}$).

\autoref{tmap} says that $\Omega_{B/\kk}=(B\otimes_A\Omega_{A/\kk})\oplus P$ and in this particular case $P=Bdz$, the free module of rank one generated by $dz$.
Hence there is a chain map of de Rham complexes $\phi^\bullet:\Omega^\bullet (M)\to \Omega^\bullet (M_+)$, where $\Omega^\bullet(M)$ is the de Rham complex of $M$ in the category of $A$-modules while $\Omega^\bullet(M_+)$ is the de Rham complex of $M_+$ in the category of $B$-modules and the chain map sends $m\otimes\omega\in M\otimes_A\Omega^i(A)$ to $(m\otimes\omega)\cdot\frac{1}{z}dz\in M_+\otimes_B\Omega^{i+1}(B)$. Thus the chain map sends $\Omega^\bullet(M)$ to the annihilator of the ideal $J=(z)$ in the complex $P\otimes_B\wedge^\bullet(B\otimes_A\Omega_{A/\kk})$, exactly as in the proof of \autoref{morphism}.

\begin{prop}\label{isohom}
This chain map $\phi^\bullet:\Omega^\bullet(M)\to \Omega^\bullet(M_+)$ induces an isomorphism on homology, i.e.~$\phi^i_*:h^p(\Omega^\bullet(M))\to h^{p+1}(\Omega^\bullet(M_+))$ is an isomorphism for all $p$.
\end{prop}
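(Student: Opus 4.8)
\emph{Proof sketch.} The plan is to realize $\phi^\bullet$ as the inclusion of a subcomplex with acyclic quotient. By Lemma 3.5 (here in the case $P=B\,dz$), for every $B$-module $N$ the de Rham complex $\Omega^\bullet(N)$ splits in each degree as $\Omega^i(N)=\bigl(N\otimes_A\Omega^i(A)\bigr)\oplus\bigl(N\otimes_A\Omega^{i-1}(A)\bigr)\wedge dz$, and in these terms the de Rham differential is the sum of the ``$A$-direction'' part (the de Rham differential of $N$ viewed as a $D(A,k)$-module, which involves only the $\partial_{x_j}$'s and which we write $d_A$) and the ``$z$-direction'' part $\eta\mapsto\pm(\partial_z\eta)\wedge dz$, where $\partial_z$ acts coefficientwise. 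Consequently the ``$\wedge dz$'' summand is a subcomplex of $\Omega^\bullet(N)$, isomorphic to $\Omega^\bullet_A(N)$ shifted up by one degree, with quotient complex $\Omega^\bullet_A(N)$, and the connecting map of the associated long exact sequence is $\pm(\partial_z)_*$.

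First I would record what $\phi^\bullet$ is in this language. Since $m\mapsto m\cdot z^{-1}$ is an isomorphism of $D(A,k)$-modules from $M$ onto the direct summand $M\cdot z^{-1}\subset M_+$, the map $\phi^i(\eta)=\eta\wedge(z^{-1}dz)$ is injective, with image the subcomplex $\bigl(Mz^{-1}\otimes_A\Omega^\bullet(A)\bigr)\wedge dz$ of $\Omega^\bullet(M_+)$ occurring in the discussion preceding the proposition; that $\phi^\bullet$ is a chain map is exactly the statement that $z^{-1}dz$ is a closed $1$-form. So it suffices to prove that the quotient complex $Q^\bullet:=\Omega^\bullet(M_+)\big/\phi^\bullet(\Omega^\bullet(M))$ is acyclic.

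Now $Q^\bullet$ inherits a two-step filtration from the splitting above: its ``$\wedge dz$'' part is $\bigl((M_+/Mz^{-1})\otimes_A\Omega^{\bullet-1}(A)\bigr)\wedge dz$, a subcomplex isomorphic to $\Omega^\bullet_A(M_+/Mz^{-1})$ shifted up by one, and the quotient by it is $\Omega^\bullet_A(M_+)$; the connecting map $h^p\bigl(\Omega^\bullet_A(M_+)\bigr)\to h^p\bigl(\Omega^\bullet_A(M_+/Mz^{-1})\bigr)$ of the corresponding long exact sequence is $\pm(\overline{\partial_z})_*$, where $\overline{\partial_z}\colon M_+\to M_+/Mz^{-1}$ is induced by $\partial_z$. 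The crux is that $\overline{\partial_z}$ is an \emph{isomorphism} of $D(A,k)$-modules: because $\partial_z(mz^{-i})=-i\,mz^{-i-1}$ and $-i$ is invertible for $i\geq1$ (this is where characteristic $0$ enters), $\partial_z$ carries $Mz^{-i}$ isomorphically onto $Mz^{-i-1}$, so $\partial_z$ is injective on $M_+=\bigoplus_{i\geq1}Mz^{-i}$ with image $\bigoplus_{i\geq2}Mz^{-i}$, which is precisely the kernel of the projection $M_+\twoheadrightarrow Mz^{-1}$. Since $\overline{\partial_z}$ commutes with each $\partial_{x_j}$ and $-\otimes_A\Omega^\bullet(A)$ is exact, $\overline{\partial_z}$ induces an isomorphism of complexes $\Omega^\bullet_A(M_+)\xrightarrow{\ \sim\ }\Omega^\bullet_A(M_+/Mz^{-1})$; hence the connecting map is an isomorphism, the long exact sequence forces $h^\bullet(Q^\bullet)=0$, and $\phi^\bullet$ is a quasi-isomorphism, i.e.\ $\phi^i_*\colon h^p(\Omega^\bullet(M))\to h^{p+1}(\Omega^\bullet(M_+))$ is an isomorphism for all $p$.

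I expect the only real friction to be bookkeeping: checking that the degreewise splitting of $\Omega^\bullet(M_+)$ is compatible with the de Rham differential in the stated way, so that the relevant pieces are genuinely sub/quotient complexes and the connecting maps are $\pm(\partial_z)_*$ and $\pm(\overline{\partial_z})_*$; and keeping in mind that although $M_+$ is not finitely generated, this causes no difficulty, since $\Omega^\bullet(-)$ and cohomology commute with the direct sum $M_+=\bigoplus_{i\geq1}Mz^{-i}$. (One could alternatively run the same computation through the long exact sequence attached to the short exact sequence $0\to M\otimes_A B\to(M\otimes_A B)_z\to M_+\to 0$ of $D(B,k)$-modules, together with a K\"unneth-type identification of the de Rham cohomology of $M\otimes_AB$ and of $(M\otimes_AB)_z$, but the direct argument above is shorter and makes the role of $\phi^\bullet$ transparent.)
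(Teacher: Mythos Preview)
Your argument is correct and rests on the same two ingredients as the paper's proof: the degreewise splitting $\Omega^i(M_+)=(M_+\otimes_A\Omega^i(A))\oplus(M_+\otimes_A\Omega^{i-1}(A))\wedge dz$, and the fact (in characteristic $0$) that $\partial_z$ acts injectively on $M_+=\bigoplus_{i\geq 1}Mz^{-i}$ with cokernel $Mz^{-1}$. The packaging differs. The paper applies the long exact sequence directly to the short exact sequence $0\to M_+\otimes_A\Omega^{\bullet-1}(A)\xrightarrow{\wedge dz}\Omega^\bullet(M_+)\to M_+\otimes_A\Omega^\bullet(A)\to 0$, invokes Switala's identification of the connecting map with $\partial_z$, reads off $h^{p+1}(\Omega^\bullet(M_+))\cong\operatorname{coker}(\partial_z)_*=h^p(\Omega^\bullet(M))\cdot z^{-1}$, and then matches this with $\phi_*$. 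You instead pass first to the cokernel $Q^\bullet$ of $\phi^\bullet$ and run the analogous long exact sequence there, where the connecting map becomes the \emph{isomorphism} $\overline{\partial_z}$, so acyclicity of $Q^\bullet$ falls out immediately. Your version has the minor advantage that it is visibly a statement about $\phi^\bullet$ from the start, so no separate identification step is needed; the paper's version has the minor advantage that it can cite Switala's Lemma 2.22 for the connecting map rather than recompute it. Either way the ``bookkeeping'' you flag (that the $\wedge dz$ pieces are genuine subcomplexes with differential $d_A$, and that the connecting map is $\pm\partial_z$) is the same routine check.
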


\emph{Proof.} Consider the short exact sequence of modules
\begin{equation}\label{ses} 0 \rightarrow M _+\otimes_A \Omega^{p-1}(A) \rightarrow M_+ \otimes_B \Omega^p (B) \rightarrow M_+\otimes_A \Omega^p (A) \rightarrow 0,
\end{equation}
 where the first map is the wedge map with $dz$, and the second map acts by sending $dz$ to $0$.  This induces a short exact sequence on complexes, and so induces a long exact sequence in de Rham cohomology:
$$\cdots \rightarrow h^{p-1}(M_+ \otimes_A \Omega^{\bullet}(A)) \rightarrow h^p(M_+ \otimes_B\Omega^{\bullet}(B)) \rightarrow h^p (M_+ \otimes_{A} \Omega^{\bullet}(A)) \rightarrow h^p (M_+ \otimes_A \Omega^{\bullet}(A)) \rightarrow \cdots$$ where the last map is the connecting homomorphism.  Switala showed that the connecting homomorphism map is the same as $\partial_z$. (Lemma 2.22 in \cite{switala})

Hence the last map above is $\partial z: h^p (M_+ \otimes_{A} \Omega^{\bullet}(A)) \rightarrow h^p (M_+ \otimes_A \Omega^{\bullet}(A))$.  Since the maps in the complex $\Omega^\bullet(A)$ are unaffected by $z$ or $dz$, it follows that $$h^p (M_+ \otimes_{A} \Omega^{\bullet}(A))=h^p(M\otimes_A\Omega^\bullet(A))\cdot\frac{1}{z}\oplus h^p(M\otimes_A\Omega^\bullet(A))\cdot\frac{1}{z^2}\oplus\dots.$$ Therefore $$\ker(\partial z: h^p (M_+ \otimes_{A} \Omega^{\bullet}(A)) \rightarrow h^p (M_+ \otimes_A \Omega^{\bullet}(A)))= 0$$ and $$\coker(\partial z: h^p (M_+ \otimes_{A} \Omega^{\bullet}(A)) \rightarrow h^p (M_+ \otimes_A \Omega^{\bullet}(A)))= h^p(M\otimes_A\Omega^\bullet(A))\cdot\frac{1}{z}.$$ 

Therefore $h^{p+1}(\Omega^\bullet(M_+))=h^p(M\otimes_A\Omega^\bullet(A))$ and the isomorphism is induced by the chain map $M\otimes_A\Omega^\bullet(A)\to M_+\otimes_B\Omega^\bullet(B)$ that sends $M\otimes_A\Omega^i(A)$ to $M\otimes_A\Omega^i(A)\cdot\frac{1}{z}dz\in \Omega^{p+1}(B)$, exactly as claimed. \qed

Nu\'{n}\~{e}z-Betancourt and Witt showed the following:

\begin{lemma}[Lemma 3.9 in \cite{emilyluis}]\label{plusop}
Let $A$ be a noetherian ring, $M$ an $A$-module, and $B=A[[x]]$.  For every ideal $I \subseteq A$ any $i \in \mathbb N$, $$(H^i_I(M))_+ \cong H^{i+1}_{(I,x)}(M\otimes_A B)$$
\end{lemma}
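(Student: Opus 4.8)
\emph{Proof plan.} The plan is to factor the local cohomology functor $\Gamma_{(I,x)B}$ on $B$-modules as $\Gamma_{IB}\circ\Gamma_{xB}$ and to exploit that, for the particular module $M\otimes_A B$, the inner functor $\Gamma_{xB}$ has cohomology concentrated in degree $1$, where it is exactly $M_+$. For any $B$-module $N$, an element is killed by a power of $(I,x)B$ if and only if it is killed both by a power of $xB$ and by a power of $IB$, so $\Gamma_{(I,x)B}=\Gamma_{IB}\circ\Gamma_{xB}$; since $B=A[[x]]$ is noetherian and $\Gamma_{xB}$ sends injective $B$-modules to injective (hence $\Gamma_{IB}$-acyclic) ones, Grothendieck's composite-functor theorem gives a spectral sequence $E_2^{p,q}=H^p_{IB}(H^q_{xB}(N))\Rightarrow H^{p+q}_{(I,x)B}(N)$.

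Next I would specialize to $N=M\otimes_A B$. Tensoring the exact sequence $0\to B\xrightarrow{x}B\to B/xB=A\to 0$ of $A$-modules with $M$ over $A$ and using $\mathrm{Tor}_1^A(M,A)=0$ shows $x$ acts injectively on $M\otimes_A B$; as the \v{C}ech complex on the single element $x$ has length one, this gives $H^0_{xB}(M\otimes_A B)=0$, $H^q_{xB}(M\otimes_A B)=0$ for $q\ge 2$, and
\[
H^1_{xB}(M\otimes_A B)=(M\otimes_A B)_x/(M\otimes_A B)=M\otimes_A(B_x/B)=M_+,
\]
all $B$-linearly. Only the row $q=1$ survives, so the spectral sequence degenerates and yields a $B$-linear isomorphism $H^{i+1}_{(I,x)B}(M\otimes_A B)\cong H^i_{IB}(M_+)$ for every $i\ge 0$.

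It remains to identify $H^i_{IB}(M_+)$ with $(H^i_I(M))_+$. Choosing generators $f_1,\dots,f_r\in A$ of $I$ (which also generate $IB$), the \v{C}ech complex $\check C^\bullet(f_1,\dots,f_r;-)$ computes $H^\bullet_{IB}$ on $B$-modules, and localization at an $f_j\in A$ is the same operation whether a module is regarded over $A$ or over $B$. Since $B_x/B$ is free, hence flat, over $A$, the plus functor $(-)_+=-\otimes_A(B_x/B)$ is exact and commutes with these localizations, so $\check C^\bullet(f_1,\dots,f_r;M_+)\cong\check C^\bullet(f_1,\dots,f_r;M)_+$ as complexes of $B$-modules; taking cohomology gives $H^i_{IB}(M_+)\cong(H^i_I(M))_+$, and composing with the previous isomorphism proves the lemma. (One can also avoid the spectral sequence: adjoining $x$ to $f_1,\dots,f_r$ gives the standard short exact sequence of \v{C}ech complexes and hence a long exact sequence in which the localization maps $H^i_I(M)\otimes_A B\to H^i_I(M)\otimes_A B_x$ are injective with cokernel $(H^i_I(M))_+$, forcing the same answer.)

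The main obstacle is the computation of $H^1_{xB}(M\otimes_A B)$: one must verify, with no finiteness hypothesis on $M$, that $x$ is a nonzerodivisor on $M\otimes_A B$ and, above all, that the resulting cokernel $(M\otimes_A B)_x/(M\otimes_A B)$ is canonically $M_+$ \emph{as a $B$-module}, since this identification together with the vanishing $H^{\ne 1}_{xB}(M\otimes_A B)=0$ is precisely what the statement rests on; everything else is formal.
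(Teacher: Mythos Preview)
Your argument is correct. The paper does not supply its own proof of this lemma: it is stated as a quotation (Lemma~3.9 of N\'u\~nez-Betancourt--Witt) and used as a black box in the proof of Proposition~4.2. So there is no proof in the paper to compare against; you have simply filled in what the paper outsources.

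For what it is worth, the approach in N\'u\~nez-Betancourt--Witt is closer to your parenthetical alternative than to your main spectral-sequence argument: they work directly with \v{C}ech complexes on the generators $f_1,\dots,f_r,x$ and use the short exact sequence obtained by adjoining $x$, together with flatness of $B_x/B$ over $A$, to get the shift by one. Your Grothendieck-spectral-sequence route is a perfectly good (and arguably cleaner) substitute; the only point that needs care, as you yourself flag, is the identification $(M\otimes_A B)_x/(M\otimes_A B)\cong M\otimes_A(B_x/B)$, and your justification via the regularity of $x$ on $M\otimes_A B$ (from the sequence $0\to B\xrightarrow{x}B\to A\to 0$ tensored with $M$) together with $B_x/B\cong\bigoplus_{n\ge 1}A\cdot x^{-n}$ being $A$-free handles it.
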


\begin{prop}
Let $f_i\in T$ be such that the ideal $J_{f_i}$ is a complete intersection, i.e.~$J_{f_i}=(z_1,\dots,z_{n-s})$. Let $\psi^\bullet:\Omega^\bullet(R_{f_i})\to \Omega^\bullet(H^{n-s}_J(T_{f_i}))$ be the morphism of complexes from the proof of \autoref{morphism} and let $H^q_I(\psi^\bullet):\Omega^\bullet(H^q_I(R_{f_i}))\to \Omega^\bullet(H^q_I(H^{n-s}_J(T_{f_i})))$ be the induced morphism on the de Rham complexes of local cohomology modules. Considering that $H^q_I(H^{n-s}_J(T_{f_i}))\cong H^{q+n-s}_I(T_{f_i})$, the morphism $H^{n-s}_I(\psi^\bullet)$ induces an isomorphism on homology, i.e.~ the map $\psi^p_*:h^p(\Omega^\bullet(H^q_I(R_{f_i})))\to h^{p+n-s}(\Omega^\bullet(H^{q+n-s}_{I}(T_{f_i})))$ is an isomorphism.
\end{prop}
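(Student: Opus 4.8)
The plan is to bootstrap from the one power‑series‑variable case — the proposition above showing that the chain map $\phi^\bullet:\Omega^\bullet(M)\to\Omega^\bullet(M_+)$ induces an isomorphism on homology, together with Lemma \ref{plusop} — by iterating it $n-s$ times. Using $\hat T_{f_i}=R_{f_i}[[z_1,\dots,z_{n-s}]]$ with $J=(z_1,\dots,z_{n-s})$, I would set $A_0=R_{f_i}$ and $A_j=A_{j-1}[[z_j]]=R_{f_i}[[z_1,\dots,z_j]]$ for $1\le j\le n-s$, so that $A_{n-s}=\hat T_{f_i}$; write $\bar I_0\subset R_{f_i}$ for the image of $I$ and $\bar I_j=\bar I_{j-1}A_j+z_jA_j$. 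Since $A_{n-s}/(z_1,\dots,z_{n-s})=R_{f_i}$ carries $\bar I_{n-s}$ back to $\bar I_0$, the ideals $\bar I_{n-s}$ and $I\hat T_{f_i}$ both cut out $\Spec S_{f_i}$ in $\Spec\hat T_{f_i}$, hence have the same radical; and since local cohomology commutes with $J$-adic completion (see Lemma \ref{completion}), $H^\bullet_{\bar I_{n-s}}(\hat T_{f_i})\cong H^\bullet_I(T_{f_i})$ and likewise $H^{n-s}_{(z_1,\dots,z_{n-s})}(\hat T_{f_i})=H^{n-s}_J(T_{f_i})$.

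On the level of modules, I would apply Lemma \ref{plusop} at each step $j$ with base ring $A_{j-1}$, module $A_{j-1}$, and ideal $\bar I_{j-1}$, obtaining natural isomorphisms $\bigl(H^{q+j-1}_{\bar I_{j-1}}(A_{j-1})\bigr)_+\cong H^{q+j}_{\bar I_j}(A_j)$, the plus at step $j$ adjoining $z_j$. Composing these, and starting from $H^q_{\bar I_0}(A_0)=H^q_I(R_{f_i})$, identifies the $(n-s)$-fold iterated plus of $H^q_I(R_{f_i})$ with $H^{q+n-s}_{\bar I_{n-s}}(\hat T_{f_i})\cong H^{q+n-s}_I(T_{f_i})$, which by the isomorphism recalled in the statement equals $H^q_I(H^{n-s}_J(T_{f_i}))$; the analogous composition starting from $A_0=R_{f_i}$ itself identifies the $(n-s)$-fold iterated plus of $R_{f_i}$ with $H^{n-s}_J(T_{f_i})$. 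On the level of de Rham homology, the one‑variable proposition on $\phi^\bullet$ applies verbatim over each $A_{j-1}$ — its proof uses only the short exact sequence \eqref{ses}, Switala's identification of the connecting map with $\partial_{z_j}$, and the explicit homology of $M_+\otimes_{A_{j-1}}\Omega^\bullet(A_{j-1})$, none of which needs $A_{j-1}$ to be a finitely generated $k$-algebra — so step $j$ produces a chain map $\phi^\bullet_j$ inducing $h^p\cong h^{p+1}$, and the composite $\Phi^\bullet$ runs from $\Omega^\bullet(H^q_I(R_{f_i}))$ to $\Omega^\bullet$ of the $(n-s)$-fold iterated plus of $H^q_I(R_{f_i})$ and induces isomorphisms $h^p(\Omega^\bullet(H^q_I(R_{f_i})))\cong h^{p+n-s}(\Omega^\bullet(H^{q+n-s}_I(T_{f_i})))$.

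Finally I must check that this composite $\Phi^\bullet$ is the map $H^q_I(\psi^\bullet)$, and I expect this bookkeeping to be the main obstacle. Each $\phi^\bullet_j$ sends $M\otimes_{A_{j-1}}\Omega^i(A_{j-1})$ into the $z_j$-torsion submodule of $M_+\otimes_{A_j}\Omega^{i+1}(A_j)$ by multiplying by $z_j^{-1}$ and wedging with $dz_j$, so $\Phi^\bullet$ sends $H^q_I(R_{f_i})\otimes\Omega^i(R_{f_i})$ into the submodule obtained by multiplying by $(z_1\cdots z_{n-s})^{-1}$ and wedging with $dz_1\wedge\dots\wedge dz_{n-s}$, landing in the $(z_1,\dots,z_{n-s})$-torsion subcomplex of $\Omega^\bullet(H^{q+n-s}_I(T_{f_i}))$. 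This is precisely the subcomplex through which $\psi^\bullet$ was constructed in the proof of Proposition \ref{morphism}, with the form $\tilde y=dy_1\wedge\dots\wedge dy_{n-s}$ there playing the role of $dz_1\wedge\dots\wedge dz_{n-s}$ and the $J$-torsion generator $(y_1\cdots y_{n-s})^{-1}$ being exactly what survives, now with $H^q_I(-)$ applied throughout (legitimate because local cohomology of a $\mathcal D$-module is again a $\mathcal D$-module, so $H^q_I$ of a de Rham complex is the de Rham complex of $H^q_I$). Matching these two explicit descriptions — keeping track of the ordering of the $z_j$ and of signs — should give $\Phi^\bullet=H^q_I(\psi^\bullet)$, possibly up to an automorphism of the target acting as the identity on homology, which completes the proof. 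All the homological content is supplied by Lemma \ref{plusop} and the $\phi^\bullet$ proposition; the genuine work lies in this last identification and in iterating the one‑variable results cleanly.
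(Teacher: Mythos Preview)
Your proposal is correct and follows essentially the same approach as the paper: set $B_t=R_{f_i}[[z_1,\dots,z_t]]$, iterate the plus construction and the one-variable $\phi^\bullet$ proposition $n-s$ times using Lemma~\ref{plusop} to identify the iterated plus with the appropriate local cohomology module, and conclude that the composite induces an isomorphism on homology. The paper simply asserts that this composite coincides with $H^q_I(\psi^\bullet)$, whereas you (rightly) flag this identification as the bookkeeping step and sketch the check via the explicit formula $(z_1\cdots z_{n-s})^{-1}\,dz_1\wedge\dots\wedge dz_{n-s}$.
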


\emph{Proof.} According to \autoref{completion}, after renaming $R_{f_i}$ as $A$ we may replace $T_{f_i}$ by the power series ring $B=A[[z_1,\dots,z_{n-s}]]$.
Now let $B_t=A[[z_1,\dots,z_t]]$ for $1\leq t\leq k$. Then $B_{t}=B_{t-1}[[z_{t}]]$ and therefore iterating the plus construction $t$ times we get by induction on $t$ that $(\dots(H^i(A)_+)_+\dots)_+=H^{i+t}_{(I, z_1,\dots,z_t)}(B_t)$.

The morphism of complexes $H^{n-s}_I(\psi^\bullet):\Omega^\bullet(H^{n-s}_I(A))\to \Omega^\bullet(H^{n-s}_{(I,z_1,\dots,z_{n-s})}(B))$ is the composition $\psi_1^\bullet\circ\psi_2^\bullet\circ\dots\circ\psi^\bullet_{n-s}$, where $\psi_i^\bullet:\Omega^\bullet(H^{i-1}_{(I,z_1,\dots,z_{i-1})}(B_{i-1}))\to \Omega^\bullet (H^i_{(I,z_1,\dots,z_i)}(B_i))$ is the morphism of \autoref{completion}. Since by \autoref{completion} each morphism in the composition induces an isomorphism on homology, so does the composition. \qed

We can now prove \autoref{e2iso}:

\emph{Proof.} Let $\psi^\bullet:\Omega^\bullet(R)\to \Omega^\bullet(H^{n-s}_J(T))$ be the morphism of complexes in \autoref{morphism}. It induces a morphism $\psi^\bullet_{f_i}:\Omega^\bullet(R_{f_i})\to \Omega^\bullet(H^{n-s}_J(T_{f_i}))$ for every $i$. By \autoref{isohom} this map induces an isomorphism $(\psi_{f_i}^p)_*:h^p(\Omega^\bullet(H^q_I(R_{f_i})))\to h^p(\Omega^\bullet(H^{q+n-s}_I(T_{f_i})))$ on homology, for all $p$. 

For any $R$-module or $T$-module $M$, the Cech complex: $$0 \rightarrow M \rightarrow \oplus M_{f_i} \rightarrow \dots \rightarrow M_{f_1\dots f_m} \rightarrow 0$$ is exact. Hence the induced morphism of complexes $H^q_I(\psi^\bullet):\Omega^\bullet(H^q_I(R))\to \Omega^\bullet(H^q_I(H^{n-s}_J(T)))$ induces (considering that $H^q_I(H^{n-s}_J(T))\cong H^{q+n-s}_I(T)$, a commutative diagram of complexes with exact rows:
\begin{scriptsize}
$$\begin{CD}
0@>>>\Omega^\bullet(H^q_I(R))@>>>\oplus\Omega^\bullet(H^q_I(R_{f_i}))@>>>\dots@>>>\Omega^\bullet(H^q_I(R_{f_1\cdots f_m}))@>>>0\\
@.@VV\psi^\bullet V@VV\oplus\psi^\bullet_{f_i}V@.@VV\psi^\bullet_{f_1\cdots f_m}V\\
0@>>>\Omega^\bullet(H^{q+n-s}_I(T))@>>>\oplus\Omega^\bullet(H^{q+n-s}_I(T)_{f_i})@>>>\dots@>>>\Omega^\bullet(H^{q+n-s}_I(T)_{f_1\cdots f_m})@>>>0\\
\end{CD}$$
\end{scriptsize}
where the vertical maps $\oplus\psi_{f_{i_1}\cdots f_{i_j}}^\bullet$ are direct sums of localizations of the vertical map $\psi^\bullet$ on the very left of the diagram. We have shown that each $\psi^\bullet_{{f_1}\cdots f_{i_j}}$ induces an isomorphism on homology. Hence so does every vertical map $\oplus\psi_{f_{i_1}\cdots f_{i_j}}^\bullet$.

Let $K^\bullet_{R,j}$ be the kernel of the chain map $\oplus\Omega^\bullet(H^q_I(R)_{f_{i_1}\cdots f_{i_j}}))\to \oplus\Omega^\bullet(H^q_I(R_{f_{i_1}\cdots f_{i_j}f_{i_{j+1}}}))$ in the top row of the diagram and let $K^\bullet_{T,j}$ be the kernel of the corresponding chain map $\oplus\Omega^\bullet(H^{q+n-s}_I(T_{f_{i_1}\cdots f_{i_j}}))\to \oplus\Omega^\bullet(H^{q+n-s}_I(T_{f_{i_1}\cdots f_{i_j}f_{i_{j+1}}}))$ in the bottom row. Both $K^\bullet_{R,j}$ and $K^\bullet_{T,j}$ are complexes and the vertical maps in the diagram induce a morphism of complexes $\psi^\bullet_{K,j}:K^\bullet(R,j)\to K^\bullet(T,j)$. 

We claim that the induced maps on homology $(\psi^p_{K,j})_*:h^p(K^\bullet(R,j))\to h^p(K^\bullet(T,j))$ are isomorphisms for all $p$. To prove this claim we use descending induction on $j$. To begin the induction, let $j=m-1$. The above diagram induces the following commutative diagram with short exact rows.

$$\minCDarrowwidth10pt\begin{CD}
0@>>>K^\bullet_{R,m-1}@>>>\oplus_j\Omega^\bullet(H^q_I(R_{{f_1}\cdots f_{i_{j-1}}f_{i_{j+1}}\cdots f_m}))@>>> \Omega^\bullet(H^q_I(R_{{f_1\cdots f_m}}))@>>> 0\\
@. @VV\psi^\bullet_{K,m-1}V @VV\oplus_j\psi^\bullet_{1,\dots,j-1,j+1,\dots,m}V@VV\psi^\bullet_{{f_1}\cdots f_m}V\\
0@>>>K^\bullet_{T,m-1}@>>>\oplus_j\Omega^\bullet(H^{q+n-s}_I(T_{{f_1}\cdots f_{i_{j-1}}f_{i_{j+1}}\cdots f_m}))@>>> \Omega^\bullet(H^{q+n-s}_I(T_{{f_1\cdots f_m}}))@>>> 0\\
\end{CD}$$

Let us write, for the sake of legibility, $A_R^{\bullet} = \oplus_j\Omega^\bullet(H^q_I(R_{{f_1}\cdots f_{i_{j-1}}f_{i_{j+1}}\cdots f_m}))$, $A_T^{\bullet}$ = $\oplus_j\Omega^\bullet(H^{q+n-s}_I(T_{{f_1}\cdots f_{i_{j-1}}f_{i_{j+1}}\cdots f_m}))$, and $p' = p+n-s$. Then this commutative diagram induces the following commutative diagram where the rows are the long exact sequences produced by the short exact sequences in the rows of the preceding diagram.  

\begin{footnotesize}
$$\minCDarrowwidth10pt\begin{CD}
h^{p-1}(A_R^{\bullet}) @>>> h^{p-1}(\Omega^\bullet(R_{{f_1\cdots f_m}})) @>>> h^p(K^\bullet_{R,m-1})@>>> h^{p}(A_R^{\bullet})@>>> h^{p}(\Omega^\bullet(R_{{f_1\cdots f_m}}))\\
@VV\cong V @VV\cong V @VVV @VV \cong V @VV\cong V\\
h^{p'-1}(A_T^{\bullet}) @>>> h^{p'-1}(\Omega^\bullet(H^{n-s}_J(T_{{f_1\cdots f_m}})))@>>> h^{p'}(K^\bullet_{T,m-1}) @>>> h^{p'}(A_T^{\bullet})@>>> h^{p'}(\Omega^\bullet(H^{n-s}_J(T_{{f_1\cdots f_m}}))) \\
\end{CD}$$
\end{footnotesize}

The two vertical maps on the left and the two vertical maps on the right have been shown to be isomorphisms. By the 5-lemma the map in the middle is an isomorphism as well. This finishes the proof of the claim in the case $j=m-1$. 

Now let $j<m-1$ and assume the claim proven for $j+1$. Since the rows in the very first commutative diagram in this proof are exact, we get the following commutative diagram with exact rows, where the direct sum in the middle is over all $j$-element subsets of the set $\{1,\dots,m\}$.

$$\minCDarrowwidth10pt\begin{CD}
0@>>>K^\bullet_{R,j}@>>>\oplus\Omega^\bullet(R_{{f_{i_1}}\cdots f_{i_j}})@>>> K^\bullet_{R,j+1}@>>> 0\\
@. @VV\psi^\bullet_{K,j}V @VV\oplus\psi^\bullet_{f_{i_1}\cdots f_{i_j}}V@VV\psi^\bullet_{K,j+1}V\\
0@>>>K^\bullet_{T,j}@>>>\oplus\Omega^\bullet(H^{n-s}_J(T_{f_{i_1}\cdots f_{i_j}}))@>>> K^\bullet_{T,j+1}@>>> 0\\
\end{CD}$$

This commutative diagram induces the following commutative diagram where the rows are the long exact sequences produced by the short exact sequences in the rows of the preceding diagram.  Again, we write $p'$ for $p+n-s$.

\begin{footnotesize}
$$\minCDarrowwidth10pt\begin{CD}
h^{p-1}(\oplus\Omega^\bullet(R_{{f_{i_1}}\cdots f_{i_j}}) @>>> h^{p-1}(K^\bullet_{R,j+1}) @>>> h^p(K^\bullet_{R,j})@>>> h^{p}(\Omega^\bullet(R_{{f_{i_1}}\cdots f_{i_j}})@>>> h^{p}(K^\bullet_{R,j+1})\\
@VV\cong V @VV\cong V @VVV @VV \cong V @VV\cong V\\
h^{p'-1}( \oplus\Omega^\bullet(H^{n-s}_J(T_{f_{i_1}\cdots f_{i_j}})) @>>> h^{p'-1}(K^\bullet_{T,j+1})@>>> h^{p'}(K^\bullet_{T,j}) @>>> h^{p'}(\oplus\Omega^\bullet(H^{n-s}_J(T_{f_{i_1}\cdots f_{i_j}}))@>>> h^{p'}(K^\bullet_{T,j+1}) \\
\end{CD}$$
\end{footnotesize}

The first and the third vertical maps in this diagram have been shown to be isomorphisms, while the second and the foruth vertical maps are isomorphisms by the induction hypothesis. The 5-lemma implies that the vertical map in the middle is an isomorphism as well. This completes the proof of the claim.

In particular, the map $\psi^\bullet_{K,1}:K^\bullet(R,1)\to K^\bullet(T,1)$ induces for all $p$ isomorphisms $(\psi^p_{K,1})_*:h^p(K^\bullet(R,1))\to h^p(K^\bullet(T,1))$ on homology.

The very first commutative diagram in this proof induces the following commutative diagram with short exact rows.

$$\minCDarrowwidth10pt\begin{CD}
0@>>>\Omega^\bullet(R)@>>>\oplus\Omega^\bullet(R_{f_i})@>>>K^\bullet_{R,1}@>>>0\\
@.@VV\psi^\bullet V@VV\oplus\psi^\bullet_{f_i}V@VV\psi^\bullet_{K,1}V\\
0@>>>\Omega^\bullet(H^{n-s}_J(T))@>>>\oplus\Omega^\bullet(H^{n-s}_J(T)_{f_i})@>>>K^\bullet_{T,1}@>>>0\\
\end{CD}$$

This diagram similarly produces a commutative diagram with long exact sequences in the rows and the 5-lemma then implies that the chain map $\psi^\bullet$ induces isomorphisms on homology.\qed

\section{Proof of Theorem 1.1.}

Let $Y\to X$ be an embedding of an affine variety $Y$ over a characteristic 0 field $\kk$ into a smooth affine variety $X$ over $\kk$. Theorem 1.1 says that 

(1.1a) Starting with the $E_2$ page, the spectral sequence $$E_1^{n-p,n-q}=H^{n-q}_Y(X, \Omega_X^{n-p})\Longrightarrow H^{\rm dR}_{p+q}(Y)$$ is independent of the choice of $X$ and the embedding, up to a bi-degree shift, and 

(1.1b) The modules appearing on the $E_2$ page, and hence on every $E_r$ page for $r\geq 2$, are finite-dimensional $\kk$-spaces.

\smallskip

\emph{Proof of 1.1a.} Let $Y\to X'$ and $Y\to X''$ be two embeddings of $Y$ into non-singular affine varieties $X'$ and $X''$. Emded $X'$ and $X''$ into affine spaces $\mathbb A^{n'}$ and $\mathbb A^{n''}$ respectively. The compositions $Y\to X'\to \mathbb A^{n'}$ and $Y\to X''\to \mathbb A^{n''}$ induce embeddings of $Y$ into $\mathbb A^{n'}$ and $\mathbb A^{n''}$. 

Set $T'=\kk[x_1,\dots,x_{n'}]$ and $T''=\kk[y_1,\dots,y_{n''}]$ so that $\mathbb A^{n'}={\rm Spec}T'$ and $\mathbb A^{n''}={\rm Spec}T''$. Let $S$ be the coordinate ring of $Y$ and let our embeddings $Y\to \mathbb A^{n'}$ and $Y\to \mathbb A^{n''}$ be given by surjections $\phi':T'\to S$ and $\phi'':T''\to S$ respectively. The diagonal embedding $Y\to \mathbb A^{n'}\otimes_{{\rm Spec}k}\mathbb A^{n''}=\mathbb A^{n'+n''}$ is then given by the surjection $\phi=\phi'\otimes_{\kk}\phi'':T\stackrel{\rm def}{=}T'\otimes_{\kk}T''\to S$ that sends $r'\otimes_{\kk}  r''$ to $\phi'(r')\cdot \phi''(r'')$.

For every $i$ let $f_i\in T'$ be an element such that $\phi''(y_i)=\phi'(f_i)$ ($f_i$ exists because the map $\phi'$ is surjective). Since $T$ is the ring of polynomials in the $n'+n''$ variables $x_i\otimes 1, 1\otimes y_j$ where $1\leq i\leq n', 1\leq j\leq n''$, there is a $\kk$-algebra surjection $\psi':T\to T'$ that sends every $x_i\otimes 1$ to $x_i$ and every $1\otimes y_j$ to $f_j$ (note that $\psi'$ is not canonical because $f_i$ is not uniquely defined). Hence the composition $T\stackrel{\psi'}{\to}T'\stackrel{\phi'}{\to}S$ is the diagonal surjection $\phi:T\to S$.

We get a sequence of three closed embeddings $Y\to X\to \mathbb A^{n'}\to \mathbb A^{n'+n''}$, where the first two have been described in the first paragraph of this proof, the last one is induced by the surjection $\psi':T\to T'$, and the composition of all three embeddings in the sequence is the diagonal embedding $Y\to \mathbb A^{n'+n''}$. Applying Proposition 3.1 and Proposition 4.1 to the resulting nested embedding $Y\to X'\to \mathbb A^{n'+n''}$ we conclude that the spectral sequence of the embedding $Y\to X'$ is isomorphic, starting with the  $E_2$ page and up to a suitable bi-degree shift, to the spectral sequence of the diagonal embedding $Y\to\mathbb A^{n'+m''}$. A symmetric argument shows that the spectral sequence of the embedding $Y\to X''$ also is isomorphic, starting with the  $E_2$ page and up to a suitable bi-degree shift, to the spectral sequence of the diagonal embedding $Y\to\mathbb A^{n'+m''}$. Thus the two embeddings $Y\to X'$ and $Y\to X''$ produce spectral sequences that are isomorphic to the same spectral sequence (starting with the $E_2$ page and up to suitable bi-degree shifts). Hence these two spectral sequences are isomorphic to each other. This concludes our proof of 1.1a. \qed

\smallskip

\emph{Proof of 1.1b.}
Due to the embedding-independence shown before, we may consider the case $X=\Spec k[x_1,...,x_n]$, and write $T$ for $\kk[x_1,...,x_n]$.  Write $S$ for the ring with $Y = \Spec S$, and let $I$ be the kernel of the surjection $T \rightarrow S$.
The $E_1$ page of the embedding is given by $E_1^{p,q} = H_{Y}^q(X,\Omega^p_{X})$, or equivalently, $E_1^{p,q}=H^q_{I}(T\otimes\Omega^p(T))$.  Since $T$ is the ring of polynomials in $n$ variables, $\Omega^p(T)$ is free of rank $n$.  Local cohomology commutes with direct sums, so this is the same as 
$H_{I}(T)\otimes_{T}\Omega^p(T)$.  Now as before, the maps in any row are the de Rham differentials of the complex $H^q_{I}(T)\otimes_{T}\Omega^{\bullet}$(T), so the modules on the $E_2$ page - the cohomology objects of these rows - are finite-dimensional by \autoref{findim}, because $H^q_{I}(T)$ is a holonomic $\mathcal{D}(T,\kk)$-module. This completes our proof of 1.1b and Theorem 1.1. \qed

\end{document}